\documentclass[11pt,reqno]{amsart} 
\usepackage[utf8]{inputenc} 
\usepackage{color}
\usepackage{cite} 
\usepackage{esint}
\usepackage[english]{babel}
\usepackage{amsmath}
\usepackage{amsthm}
\usepackage{amsfonts,mathrsfs,relsize,bigints,stmaryrd}
\usepackage{amssymb}
\usepackage{anysize}
\usepackage{hyperref}
\usepackage{appendix}
\usepackage{mathtools}
\usepackage{graphicx}
\usepackage{xcolor}
\usepackage[normalem]{ulem}
\makeatletter  \@addtoreset{equation}{section} \makeatother

\usepackage[left=2.65cm,right=2.65cm,top=2.7cm,bottom=2.7cm]{geometry}

\hypersetup{
	colorlinks=true,
	linkcolor=blue, 
	citecolor=blue,
	filecolor=blue,
	urlcolor=blue,
}

\newtheorem{lem}{Lemma}[section]
\newtheorem{theo}{Theorem}[section]

\newcommand\minus\backslash

\newcommand\lan\langle
\newcommand\ran\rangle

\renewcommand\leq\leqslant
\renewcommand\geq\geqslant

\DeclareMathOperator{\N}{\mathbb{N}}
\DeclareMathOperator{\R}{\mathbb{R}}

\DeclareMathOperator{\T}{\mathbb{T}}
\DeclareMathOperator{\D}{\mathbb{D}}

\allowdisplaybreaks 

\begin{document}

\title[Survey on periodic vortex patches]{Survey on periodic vortex patches}

\author[C. Garc\'ia]{Claudia Garc\'ia}
\address{ Departamento de Matem\'atica Aplicada \& Research Unit ``Modeling Nature'' (MNat), Facultad de Ciencias, Universidad de Granada, 18071 Granada, Spain}
\email{ claudiagarcia@ugr.es}



\begin{abstract}
This survey revisits classical results on the existence of periodic solutions in incompressible fluid dynamics. Owing to the breadth of the subject, we restrict our attention to the simplest and most illustrative framework: the two-dimensional Euler equations and vortex patch solutions. The aim is not to present new results, but rather to provide a unified and streamlined exposition of several classical constructions by combining ideas from the original works with more recent developments.
\end{abstract}

\maketitle


\section{Introduction}\label{sec:intro}

The 2D incompressible Euler equations in velocity-pressure formulation read as
\begin{equation}\label{intro:systemeuler}\left\{
\begin{array}{ll}
    \partial_t u+(u\cdot \nabla)u+\nabla p=0, & \textnormal{in }[0,T)\times \R^2,\\
    \nabla \cdot u=0,
    &\textnormal{in }[0,T)\times \R^2,\\
    |u(t,x)|\rightarrow 0, &\textnormal{as }|x|\rightarrow +\infty,\\
    u(0,\cdot)=u_0,& \textnormal{in } \R^2,
\end{array}\right.
\end{equation}
where $u=u(t,x)=(u_1(t,x), u_2(t,x))$ is the velocity field of the fluid, $p=p(t,x)$ is the pressure and $t\in[0,T)$, $x=(x_1,x_2)\in\R^2$. That is a system of three equations for three scalar quantities: $u_1, u_2, p$. However, $p$ can be recovered easily by applying the $\nabla\cdot$ operator in the first equation, achieving that $p$ is the solution of the following elliptic equation:
$$
\Delta p=\nabla\cdot (u\cdot \nabla)u.
$$
In this survey, we will be interested in the vorticity of the fluid which, in the two-dimensional setting, reads as
$$
\omega=\nabla^\perp \cdot u,\quad \nabla^\perp:=(-\partial_{x_2},\partial_{x_1}).
$$
Applying the differential operator $\nabla^\perp\cdot$ to \eqref{intro:systemeuler} we arrive to the 2D incompressible Euler equations in velocity-vorticity formulation:
\begin{equation}\label{intro:systemeuler2}\left\{
\begin{array}{ll}
    \partial_t \omega+u\cdot \nabla\omega =0, & \textnormal{in }[0,T)\times \R^2,\\
    \nabla \cdot u=0,
    &\textnormal{in }[0,T)\times \R^2,\\
     \omega=\nabla^\perp\cdot u,
    &\textnormal{in }[0,T)\times \R^2,\\
    |u(t,x)|\rightarrow 0, &\textnormal{as }|x|\rightarrow +\infty,\\
    \omega(0,\cdot)=\omega_0,& \textnormal{in } \R^2.
\end{array}\right.
\end{equation}

The three equations in the middle of \eqref{intro:systemeuler2} can be solved via the well-known Biot-Savart law, obtaining the following equivalent formulation:
\begin{equation}\label{intro:systemeuler3}\left\{
\begin{array}{ll}
    \partial_t \omega+u\cdot \nabla\omega=0 , & \textnormal{in }[0,T)\times \R^2,\\
   u=\nabla^\perp \psi,
    &\textnormal{in }[0,T)\times \R^2,\\
    \psi=G\star\omega,
    &\textnormal{in }[0,T)\times \R^2,\\
    \omega(0,\cdot)=\omega_0,& \textnormal{in } \R^2,
\end{array}\right.
\end{equation}
where $\psi=\psi(t,x)$ is called the stream function of the fluid and $G$ is the fundamental solution of the Laplacian in $\R^2$:
$$
G(x):=\frac{1}{2\pi}\ln|x|.
$$
Notice that, in the two-dimensional setting, the vorticity $\omega$ satisfies a transport equation. However, this equation is both nonlinear (since the velocity field $u$ depends on $\omega$) and nonlocal (since the stream function $\psi$ depends nonlocally on $\omega$ through a convolution operator). Moreover, since $u$ is a divergence-free vector field and $\omega$ satisfies a transport equation, all $L^p$ norms of the vorticity are conserved.

In particular, the classical Yudovich theory \cite{Y63} ensures global existence and uniqueness of solutions when the initial data $\omega_0$ belongs to $L^1\cap L^\infty$. All the solutions considered in this survey lie in this class. Consequently, our focus here is not on questions of existence or uniqueness, but rather on the identification and analysis of coherent structures in the flow. More specifically, we will study vortex patches, namely vorticity distributions whose support is a bounded domain:
$$
\omega_0=\sum_{i=1}^N \varpi_i {\bf 1}_{D_i},
$$
where $\varpi_i\in\R$ is the vorticity strength in each bounded domain $D_i$. Note that thanks to Yudovich theory we know that such initial condition, which belongs to $L^1\cap L^\infty$, has a unique associated global solution $\omega(t,\cdot)$. Indeed, the patch structure is preserved by the system due to the transport equation obtaining that:
$$
\omega(t,\cdot)=\sum_{i=1}^N \varpi_i {\bf 1}_{D_{i}(t)},
$$
where the domain $D_i$ evolves with time to $D_{i}(t)$, and $D_{i}(t)$ has the same area due to the incompressibility condition. Hence, studying vortex patch solutions is equivalent to study the evolution of the boundary of $D_{i}(t)$, which can be done by looking at the well-known contour dynamics equation:
$$
\partial_t \gamma_i(t,\theta)=\sum_{j=1}^N\frac{\varpi_j}{2\pi}\int_0^{2\pi}\ln(|\gamma_i(t,\theta)-\gamma_j(t,\eta)|)\partial_\eta \gamma_j(t,\eta)d\eta,
$$
where $\gamma_i(t,\cdot)$ is a parametrization of the boundary of $D_{i}(t)$. The boundary regularity of the patch is preserved in $C^{k,\alpha}$ for $k\in\N$ and $\alpha\in(0,1)$, see \cite{chemin, bertozzi-constantin, serfati}, and  the $C^k$ regularity, for $k\geq 2$, is ill posed, see \cite{kiselev-luo}.

A particular feature of the two-dimensional Euler system is the existence of a large family of stationary solutions. Indeed, if $\psi_0$ satisfies
$$
\Delta \psi_0=f(\psi_0),
$$
for some smooth function $f$, then the corresponding vorticity generates a stationary solution of the system. A notable class of solutions to this elliptic equation is given by radial functions. In particular, if $\omega_0\in L^1\cap L^\infty$ is radial, then it produces a stationary solution of the two-dimensional Euler equations. We will refer to this family of stationary solutions as {\it trivial solutions}.

Although our main interest in this survey is not the study of stationary solutions themselves, but rather periodic solutions bifurcating from them, there exists an extensive literature devoted to the construction of nontrivial stationary solutions; see \cite{Nadirashvili, GSPSY-rigidity, GSPS:2021} and the references therein. In the vortex patch setting, it is known from \cite{F00} that the only stationary simply-connected patch is radial one, namely the disc $\D$ (classically called the {\it Rankine vortex}). We refer also to the stationary annulus patch, which was studied in \cite{HHMV16, HMV15}.

In this survey, we establish the existence of periodic solutions by exploiting the symmetries of the trivial stationary patch $\omega_0={\bf 1}_{\D}$. For clarity of exposition, we divide the introduction into three main topics: i) rigid periodic simply connected vortex patches, ii) rigid periodic disjoint vortex patches, and iii) non rigid periodic vortex patches. The literature on this subject is vast, and here we restrict our attention to results concerning the two-dimensional Euler equations and vortex patch solutions. Extensions to other fluid models and to other types of solutions have also been developed; see \cite{ADPM21,CQZZ21,CQZZ22,CWWZ21,CCGS16,CCGS19,CCGS20,HHH16,DHR19,G20,G21,GH22,GHM22,GHM22-1,GHS20,GCGS20,HH15,HH21,HMW20,HW22,HM16-1,HM16,HM17,HMV15,HXX22,R22,R21, CL26, CCG16, HR22, HR21, HHR23, GHM23, GSPSY-sheets, GHR25} and references therein.

Before reviewing each of the three topics mentioned above, let us briefly discuss the main techniques used in this area. Roughly speaking, as we shall see later, most works on the existence of periodic solutions near a trivial one rely on perturbative arguments. In general, the problem can be formulated in terms of a suitable—often nonlinear and nonlocal—functional $F:\R\times X\rightarrow Y$, where $X$ and $Y$ are Banach spaces. The problem then reduces to finding nontrivial zeros of this functional:
$$
(\lambda, f)\textnormal{ s.t. } F(\lambda,f)=0,
$$
in a neighborhood of a trivial solution (or family of trivial solutions). One can then apply classical tools from nonlinear analysis to establish the existence of such solutions. The first fundamental result that can be used in this context is the well-known Implicit Function theorem for Banach spaces:
\begin{theo}[Implicit Function theorem]\label{th:IFT}
Let $F:\R\times \mathcal{X}\rightarrow \mathcal{Y}$, where $\mathcal{X}\subset X$ and $\mathcal{Y}\subset Y$ are subspaces of the Banach spaces $X$ and $Y$. Assume that there is $(\lambda_0,0)\in \R\times\mathcal{X}$ with the following properties:
\begin{enumerate}
    \item $F(\lambda_0,0)=0$,
    \item $F$ and $\partial_f F$ are continuous,
    \item $\partial_f F(\lambda_0,0):\mathcal{X}\rightarrow \mathcal{Y}$ is an isomorphism.
\end{enumerate}
Then there exists a neighborhood $U=I\times \mathcal{X}_0\subset \R\times\mathcal{X}$ of $(\lambda_0,0)$ and mapping $f:I\rightarrow \mathcal{X}_0$ such that $f(\lambda_0)=0$ and 
$$
F^{-1}(\{0\})\cap U=\{(\lambda,f(\lambda)), \quad \lambda\in I\}.
$$
\end{theo}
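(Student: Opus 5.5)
To prove Theorem~\ref{th:IFT} I will reduce it to the Banach contraction mapping principle, applied for each fixed $\lambda$ near $\lambda_0$. I read the subspaces $\mathcal{X},\mathcal{Y}$ as closed, so that they are Banach spaces in the induced norms. I also read hypothesis (3) as saying that $L:=\partial_f F(\lambda_0,0)$ is a bounded bijection onto $\mathcal{Y}$. By the open mapping theorem, $L^{-1}\in\mathcal{L}(\mathcal{Y},\mathcal{X})$ is then bounded.

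For each $\lambda$, define
$$
T_\lambda(f):=f-L^{-1}F(\lambda,f),\qquad f\in\mathcal{X}.
$$
A point $f$ is a fixed point of $T_\lambda$ exactly when $F(\lambda,f)=0$.

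\textbf{Step 1: contraction estimate.} The derivative of $T_\lambda$ in $f$ is
$$
\partial_f T_\lambda(f)=L^{-1}\big(L-\partial_f F(\lambda,f)\big).
$$
By continuity of $\partial_f F$ at $(\lambda_0,0)$, I choose $\delta,r>0$ such that
$$
\|L^{-1}\|\,\|L-\partial_f F(\lambda,f)\|\leq \tfrac12 \quad\text{whenever } |\lambda-\lambda_0|<\delta \text{ and } \|f\|\leq r.
$$
The mean value inequality on the convex ball $\overline{B_r}$ then gives
$$
\|T_\lambda f-T_\lambda g\|\leq \tfrac12\|f-g\|\qquad\text{for } f,g\in\overline{B_r}.
$$

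\textbf{Step 2: self-map.} I need $T_\lambda$ to send $\overline{B_r}$ into itself. Write
$$
\|T_\lambda f\|\leq \|T_\lambda f-T_\lambda 0\|+\|T_\lambda 0\|\leq \tfrac r2+\|L^{-1}\|\,\|F(\lambda,0)\|.
$$
Since $F$ is continuous and $F(\lambda_0,0)=0$, shrinking $\delta$ if necessary makes $\|L^{-1}\|\,\|F(\lambda,0)\|<r/2$ for all $\lambda\in I:=(\lambda_0-\delta,\lambda_0+\delta)$. Hence $T_\lambda$ maps $\overline{B_r}$ into itself.

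\textbf{Step 3: existence, uniqueness and the graph description.} Banach's fixed point theorem yields a unique fixed point $f(\lambda)\in\overline{B_r}$ for each $\lambda\in I$. Because $F(\lambda_0,0)=0$, uniqueness forces $f(\lambda_0)=0$. Set $\mathcal{X}_0:=B_r$ (taking the closed ball, or slightly shrinking $r$ so that fixed points stay interior, to avoid the boundary nuisance). Uniqueness of the fixed point then says precisely that
$$
F^{-1}(\{0\})\cap (I\times\mathcal{X}_0)=\{(\lambda,f(\lambda)):\lambda\in I\}.
$$

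\textbf{Step 4: continuity of $f$.} Continuity is not demanded by the statement, but it follows at once. Fixed points of a uniform contraction depend continuously on parameters, via
$$
\|f(\lambda)-f(\mu)\|\leq 2\|T_\lambda f(\mu)-T_\mu f(\mu)\|,
$$
and the right-hand side tends to $0$ as $\lambda\to\mu$ by continuity of $F$.

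\textbf{Main obstacle.} The delicate point is Step 1. The mean value inequality there needs $\partial_f F$ to be a genuine Fréchet derivative that is continuous as an operator-valued map, and it needs uniformity in $\lambda$. I will use the continuity of $(\lambda,f)\mapsto\partial_f F(\lambda,f)$ into $\mathcal{L}(\mathcal{X},\mathcal{Y})$, which is my interpretation of hypothesis (2), together with the integral form
$$
T_\lambda f-T_\lambda g=\int_0^1\partial_f T_\lambda\big(g+s(f-g)\big)(f-g)\,ds.
$$
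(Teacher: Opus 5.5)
Your proof is correct. It is the standard contraction-mapping proof of the implicit function theorem. The interpretations you make explicit are exactly what the argument needs, and they hold wherever the paper uses the result:
\begin{enumerate}
    \item closed subspaces, so that $\overline{B_r}$ is complete;
    \item a bounded inverse $L^{-1}$;
    \item continuity of $(\lambda,f)\mapsto\partial_fF(\lambda,f)$ into $\mathcal{L}(\mathcal{X},\mathcal{Y})$.
\end{enumerate}
The paper gives no proof of this statement; it is quoted as a classical tool from nonlinear analysis, so there is no competing route to compare with.

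Two small remarks. First, your Step 2 already gives $\|T_\lambda f\|<r$, so the fixed points lie in the open ball $B_r$ automatically and you do not need to shrink $r$. Second, nothing in your proof uses that the unknown is a single function. This matters because in Section~\ref{sec:mult} the theorem is applied with parameter $\varepsilon$ and unknown $(\Omega,f)\in\R\times\tilde{X}^{1,\alpha}_1$ around the root $(\Omega_0,0)$. Your argument goes through there verbatim after translating $\Omega$ by $\Omega_0$.
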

We remark that, for simplicity, we assume that the trivial root is $(\lambda_0,0)$. More generally, one could consider a trivial root of the form $(\lambda_0,f_0)$ and reduce the problem to the previous case by a suitable change of variables.

However, the Implicit Function theorem is not useful if there exists a trivial branch of solutions of the form
\[
F(\lambda,0)=0, \qquad \forall \lambda.
\]
Indeed, if the hypotheses of the Implicit Function theorem were satisfied at $(\lambda_0,0)$, it would follow that, in a neighborhood of $(\lambda_0,0)$, the only solutions are the trivial ones $(\lambda,0)$. Therefore, in order to find nontrivial solutions, the isomorphism condition in the Implicit Function theorem must fail (in particular, the kernel of the linearized operator should be nontrivial).

This is precisely the setting where bifurcation theory becomes relevant. In this survey we will make use of the classical Crandall--Rabinowitz theorem. In order to state it, let us first recall that a bounded linear operator $\mathcal{L}:\mathcal{X}\to\mathcal{Y}$ is called a Fredholm operator of index $i$ if
\begin{enumerate}
\item $\textnormal{dim (Ker }\mathcal{L})<\infty$,
\item $\textnormal{codim (Range }\mathcal{L}):=\textnormal{dim }(\mathcal{Y}/\textnormal{Range }(\mathcal{L}))<\infty$,
\end{enumerate}
and the index $i$ is given by the following integer
$$
i:=\textnormal{dim Ker }\mathcal{L}-\textnormal{codim Range }\mathcal{L}.
$$
Finally, let us announce the classical Crandall-Rabinowitz theorem, which can be found in \cite{K04}.
\begin{theo}[Crandall-Rabinowitz theorem]\label{th:CR}
	Let $F:\R\times \mathcal{X}\rightarrow \mathcal{Y}$, where $\mathcal{X}\subset X$ and $\mathcal{Y}\subset Y$ are subspaces of the Banach spaces $X$ and $Y$. Assume that there is $(\lambda_0,0)\in \R\times\mathcal{X}$ with the following properties:
		\begin{enumerate}
			\item $F(\lambda,0)=0$ for all $\lambda\in\mathbb{R}$.
			\item The partial derivatives  $\partial_\lambda F$, $\partial_f F$ and  $\partial_{\lambda}\partial_f F$ exist and are continuous.
			\item Denoting $\mathcal{L}(\lambda):=\partial_f F(\lambda,0)$, one has that the operator $\mathcal{L}(\lambda_0)$ is Fredholm of zero index and $\textnormal{Ker}\big(\mathcal{L}(\lambda_0)\big)=<f_0>$ is one-dimensional. 
			\item  Transversality assumption: $\mathcal{L}'(\lambda_0)[f_0] \notin \textnormal{Range}\big(\mathcal{L}(\lambda_0)\big)$.
		\end{enumerate}
		If $Z$ is any complement of  $\textnormal{Ker}\big(\mathcal{L}(\lambda_0)\big)$ in $\mathcal{X}$, then there is a neighborhood $U$ of $(\lambda_0,0)$ in $\mathbb{R}\times \mathcal{X}$, an interval  $(-a,a)$ with $a>0$, and two continuous functions $\Phi:(-a,a)\rightarrow\mathbb{R}$, $\beta:(-a,a)\rightarrow Z$ such that $\Phi(0)=\lambda_0$ and $\beta(0)=0$ and
		$$F^{-1}(\{0\})\cap U=\big\{\big(\Phi(s), s f_0+s\beta(s)\big) : |s|<a\big\}\cup\big\{(\lambda,0): (\lambda,0)\in U\big\}.$$
\end{theo}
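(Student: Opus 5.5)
The plan is the classical Lyapunov--Schmidt reduction, using the Implicit Function theorem (Theorem \ref{th:IFT}) twice. To make this legitimate I will assume, as is standard and implicit in the statement, that the hypotheses hold on open subsets $\mathcal{X}\subset X$, $\mathcal{Y}\subset Y$ with $\mathcal{X}$, $\mathcal{Y}$ closed subspaces. Write $\mathcal{L}_0:=\mathcal{L}(\lambda_0)$ and $\textnormal{Ker}\,\mathcal{L}_0=\langle f_0\rangle$. Since $\mathcal{L}_0$ is Fredholm of index zero, $R:=\textnormal{Range}\,\mathcal{L}_0$ is closed of codimension one. The transversality assumption lets us choose $\mathcal{Y}=R\oplus\langle \mathcal{L}'(\lambda_0)[f_0]\rangle$. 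Let $Q:\mathcal{Y}\to\langle \mathcal{L}'(\lambda_0)[f_0]\rangle$ be the projection along $R$. Then $\mathcal{L}_0|_Z:Z\to R$ is a bounded bijection, and hence an isomorphism by the open mapping theorem.

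The key device for separating the trivial branch is to blow up along the kernel direction. For $s\neq 0$ I seek solutions of the form $(\lambda, s f_0+s z)$ with $z\in Z$, and define
$$
G(s,\lambda,z):=\begin{cases} s^{-1}F(\lambda,s(f_0+z)), & s\neq0,\\ \mathcal{L}(\lambda)[f_0+z], & s=0.\end{cases}
$$
Hypothesis (1) gives $F(\lambda,0)=0$, so
$$
G(s,\lambda,z)=\int_0^1\partial_fF(\lambda,ts(f_0+z))[f_0+z]\,dt.
$$
This representation shows that $G$ is continuous. Using the continuity of $\partial_\lambda\partial_fF$ and of $\partial_f F$, it also shows that $G$ has continuous partial derivatives in $(\lambda,z)$. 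At the base point, $G(0,\lambda_0,0)=\mathcal{L}_0 f_0=0$, and
$$
\partial_{(\lambda,z)}G(0,\lambda_0,0)[(\mu,w)]=\mu\,\mathcal{L}'(\lambda_0)[f_0]+\mathcal{L}_0 w .
$$
By the decomposition of $\mathcal{Y}$ above, this map $\R\times Z\to\mathcal{Y}$ is an isomorphism. The first summand lands in the complement of $R$ and is nonzero when $\mu\ne0$, and the second is an isomorphism $Z\to R$.

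Theorem \ref{th:IFT}, applied with parameter $s$ and unknown $(\lambda,z)$, now yields continuous maps $s\mapsto(\Phi(s),\beta(s))$ with $\Phi(0)=\lambda_0$ and $\beta(0)=0$. These describe all zeros of $G$ near $(0,\lambda_0,0)$. Since $G=0$ with $s\ne0$ is equivalent to $F(\lambda,sf_0+s z)=0$, this produces the nontrivial curve.

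For the local uniqueness statement, take any zero $(\lambda,f)$ of $F$ near $(\lambda_0,0)$ with $f\neq0$. Decompose $f=sf_0+z'$ with $z'\in Z$, which is possible because $\langle f_0\rangle\oplus Z=\mathcal{X}$. One must show that $\|z'\|\le C|s|$, so that $z=z'/s$ is small and the IFT uniqueness applies. To get this, write
$$
0=F(\lambda,f)=\mathcal{L}(\lambda)f+o(\|f\|).
$$
Applying $(I-Q)$ and the invertibility of $\mathcal{L}_0|_Z$ gives $\|z'\|\lesssim |s|+|\lambda-\lambda_0|\|f\|+o(\|f\|)$. Absorbing the last two terms gives $\|z'\|\lesssim|s|$; in particular $s\ne0$. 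Hence every nontrivial zero lies on the curve, and every trivial zero is of the form $(\lambda,0)$.

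I expect this uniqueness step to be the main obstacle. The subtlety is that the remainder must be $o(\|f\|)$ uniformly in $\lambda$ near $\lambda_0$. Under hypothesis (2) this has to be extracted from
$$
F(\lambda,f)-\partial_fF(\lambda,0)f=\int_0^1\big(\partial_fF(\lambda,tf)-\partial_fF(\lambda,0)\big)f\,dt,
$$
together with the continuity of $\partial_fF$ at $(\lambda_0,0)$. Carrying out this estimate carefully is the point where the bookkeeping is most delicate.
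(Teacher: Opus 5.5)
The paper does not prove this theorem. It quotes it from \cite{K04} and only remarks that the proof rests on a Lyapunov--Schmidt reduction. Your blow-up $G(s,\lambda,z)=s^{-1}F(\lambda,s(f_0+z))$, followed by the Implicit Function theorem in $(\lambda,z)$, is the original Crandall--Rabinowitz argument, and the existence half is sound. One small repair is needed there. Get $\partial_zG(s,\lambda,z)=\partial_fF(\lambda,s(f_0+z))$ directly from the definition; it also holds at $s=0$, where $G$ is affine in $z$. Differentiating your integral representation in $z$ would require $\partial_f^2F$, which is not assumed. The integral representation is the right tool only for $\partial_\lambda G$, via continuity of $\partial_\lambda\partial_fF$.

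The genuine gap is in the uniqueness step. The Implicit Function theorem only describes zeros of $G$ with $\|z\|<\delta$ for some small $\delta$. So for a nontrivial zero $(\lambda,sf_0+z')$ of $F$ you need $\|z'\|<\delta|s|$, that is, $\|z'\|=o(|s|)$. The bound $\|z'\|\le C|s|$ you aim for only makes $z'/s$ bounded, which does not place $(s,\lambda,z'/s)$ in the uniqueness neighborhood. Your displayed estimate, with its free $|s|$ term, cannot give more.

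What is missing is that this term is absent, because $\mathcal{L}_0f_0=0$. Since $F(\lambda,0)=0$,
\[
\mathcal{L}_0z'=\mathcal{L}_0f=F(\lambda,f)-\int_0^1\big(\partial_fF(\lambda,tf)-\mathcal{L}_0\big)f\,dt .
\]
Set $\epsilon(\lambda,f):=\sup_{t\in[0,1]}\|\partial_fF(\lambda,tf)-\mathcal{L}_0\|$ and $C:=\|(\mathcal{L}_0|_Z)^{-1}\|$. At a zero of $F$ you get $\|z'\|\le C\epsilon\,(|s|\,\|f_0\|+\|z'\|)$, hence $\|z'\|\le 2C\|f_0\|\,\epsilon\,|s|$ once $C\epsilon\le\frac12$. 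Continuity of $\partial_fF$ at $(\lambda_0,0)$ gives $\epsilon\to0$ as $(\lambda,f)\to(\lambda_0,0)$, so the uniformity in $\lambda$ you were worried about is immediate.

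After shrinking $U$, this gives $s\neq0$ whenever $f\neq0$, together with $|s|<a$, $|\lambda-\lambda_0|<\delta$ and $\|z'/s\|<\delta$. Uniqueness for $G$ then yields $\lambda=\Phi(s)$ and $z'/s=\beta(s)$. You also need the usual adjustment of $U$ and $a$ so that the curve for $|s|<a$ lies in $U$.

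In a Lyapunov--Schmidt reduction this estimate comes for free, since every zero near $(\lambda_0,0)$ is first captured by the Implicit Function theorem applied to the range equation. In your direct blow-up it must be supplied by hand, and as written it is not.
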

We remark that in some situations the Crandall--Rabinowitz theorem cannot be applied directly, since its hypotheses are not satisfied. Nevertheless, one may still have some hope of finding nontrivial solutions. When the linearized operator is Fredholm but its kernel is not one-dimensional, or the codimension of its range is not equal to one, one may perform the classical Lyapunov--Schmidt reduction in order to search for nontrivial roots (recall that the proof of the Crandall--Rabinowitz theorem itself relies on this reduction). If the transversality condition, namely hypothesis~(iv), is not satisfied, it is sometimes possible to proceed by analyzing higher-order derivatives of the functional. The most delicate situation arises when the Fredholm property of the linearized operator fails, which may occur when the codimension of the range is infinite. This difficulty appears in the study of non-rigid periodic vortex patches due to the presence of small divisors and the resulting loss of derivatives in the inverse of the linearized operator, as we will explain at the end of this introduction. In such cases, one typically needs to implement a Nash--Moser scheme to construct the desired solutions.

\medskip\noindent
$\diamond$ {\it \sc Rigid periodic simply connected vortex patches:}

In this part, we shall focus on the existence of rigidly rotating patches with some angular velocity $\Omega$. More precisely, if one assumes that the vorticity is a pure rotation of the initial one, that is,
$$
\omega(t,x)=\omega_0(R_{-\Omega t}x),\quad R_{-\Omega t}:=\left(\begin{array}{cc}
    \cos(\Omega t) & \sin(\Omega t) \\
    -\sin(\Omega t) & \cos(\Omega t)
\end{array}\right),
$$
then the 2D Euler equations reduce to the following nonlocal and nonlinear stationary equation for the initial data $\omega_0$, and its associated $u_0$, as:
$$
(u_0(x)-\Omega x^\perp)\cdot \nabla\omega_0(x)=0, \quad x\in\R^2.
$$
The time dependence disappears due to the symmetries of the equation. Similar situation happens for translating solutions, i.e. $\omega(t,x)=\omega_0(x-Vt)$ with $V\in\R^2$, arriving to
$$
(u_0(x)-V)\cdot \nabla\omega_0(x)=0,\quad x\in\R^2,
$$
although here we shall focus on rotating solutions. Hence, inserting the patch ansatz into the rotating solutions equation one arrives to
\begin{equation}\label{eq:rotating}
(u_0(x)-\Omega x^\perp)\cdot n_i(x)=0, \quad x\in\partial D_i,
\end{equation}
for any $i=1,\dots , N$, and where $n_i$ is a normal vector to $\partial D_i$.

Here we focus on a single rotating vortex patch, often referred to as {\it V-states}. As already observed by Rankine in 1858, a disc and more generally a radially symmetric domain give rise to a V-state rotating with arbitrary angular velocity $\Omega\in\mathbb{R}$. Later, Kirchhoff \cite{K74} discovered nontrivial explicit examples of V-states with elliptical shapes.

In a neighborhood of the unit disc, such solutions were first obtained numerically by Deem and Zabusky \cite{DZ78}, and later constructed analytically by Burbea \cite{B82} using bifurcation techniques. More precisely, Burbea constructed a countable family of local curves of V-states with $m$-fold symmetry (that is, invariance under rotations of angle $\tfrac{2\pi}{m}$) bifurcating from the disc at the angular velocities
\[
\Omega_m=\frac{m-1}{2m}.
\]
This result was later revisited by Hmidi, Mateu, and Verdera \cite{HMV13}, who revisited the proof using the classical Crandall--Rabinowitz theorem together with conformal mapping techniques. They also established additional qualitative properties of the bifurcating patches, such as their regularity and convexity. In this survey, Burbea's result will be revisited again in Section~\ref{sec:rigidpatch}, where we present a simplified proof that avoids the use of conformal mappings, following instead the approach of more recent related works \cite{BHM22, HHM23, HHR23}.

The global continuation of Burbea's bifurcation branches was later obtained in \cite{HMW20} using global bifurcation techniques. Similar analyses have been carried out for the bifurcation from the annulus \cite{HHMV16} and from the Kirchhoff ellipse \cite{K74}. We also remark that in \cite{HM16} the authors treat the case where the transversality condition fails. Finally, rotating solutions without the patch structure have also been studied in \cite{GHS20, GHM23, CCGS19, CL26} by bifurcation from radial vorticities.

\medskip\noindent
$\diamond$ {\it\sc  Rigid periodic disjoint vortex patches}

The existence of periodic disjoint vortex patches is of great interest since it can be observed in different situations. One of the most interesting is the {\it K\'arm\'an Vortex Street}, which consist of two rows of {\it patches} of opposite vorticity strength  that travels with a common speed. That can be observed when a fluid with a preferred direction passes a body, for example, an island in the ocean.

Mathematically, the construction of such configurations is rather delicate with the techniques currently available. In order to apply perturbative arguments, it is necessary to start from a trivial solution. However, there is no obvious trivial branch of separated non concentrated finite patch configuration of the type needed for the perturbative construction. To overcome this difficulty, one typically considers the classical $N$-vortex problem, which arises formally in the limit where each patch shrinks to a point. In this regime, the characteristic functions describing the patches are formally replaced by Dirac delta distributions:
$$
\omega(t,\cdot)=\sum_{i=1}^N \varpi_i \delta_{z_{i}(t)}.
$$
By assuming that ansatz, one formally arrives to the following system of ODE's, which is called as the N vortex problem
$$
z_i'(t)=\frac{1}{2\pi}\sum_{i\neq j=1}^N\varpi_j\frac{(z_i(t)-z_j(t))^\perp}{|z_i(t)-z_j(t)|^2},
$$
see \cite{newton:book}. Such system of ODE's is a Hamiltonian system with the following hamiltonian function
$$
H(z_1, \dots, z_N)=\frac{1}{4\pi}\sum_{i\neq j}\varpi_i\varpi_j \ln(|z_i(t)-z_j(t)|),
$$
which is directly coming from the stream function.

Then, in order to construct periodic disjoint vortex patches, one may use periodic evolutions of point vortices and {\it desingularize} them into patch solutions, that is, into proper solutions of the 2D Euler equations belonging to the Yudovich class.

In particular, one can verify that when $N=2$, namely when there are two point vortices, their dynamics is either a rigid rotation or a uniform translation. In Section~\ref{sec:mult}, we will perform the desingularization of two point vortices with equal vorticity strength that rotate around their center of mass. This provides the simplest example of a periodic evolution involving disjoint vortex patches, and was first carried out by Hmidi and Mateu in \cite{HM17}. Similar techniques have also been used to establish the existence of other classical vortex configurations, such as the Thomson polygon \cite{G21}, the K\'arm\'an vortex street \cite{G20}, asymmetric pairs \cite{HH21} or more general configurations \cite{HW22}. 
Finally, let us mention that the global continuation of the solutions constructed in \cite{HM17}, and later presented in Section \ref{sec:mult}, have been studied in \cite{GH22}.

\medskip\noindent
$\diamond$ {\it\sc Non rigid periodic vortex patches}

In the previous discussion, as well as in what follows in Sections \ref{sec:rigidpatch} and \ref{sec:mult}, we focus on rigid periodic motion. This significantly simplifies the equations under consideration, since the problem reduces to finding stationary solutions for the initial data in a rotating frame. However, there is strong evidence for the existence of non-rigid periodic solutions, and even quasi-periodic ones, close either to the Kirchhoff ellipse or to suitable configurations of four point vortices.

In recent years, several works have begun to investigate such structures using KAM techniques together with Nash--Moser iteration schemes, which are required due to the presence of small divisors. To connect with the two topics discussed above, we first address the existence of quasi-periodic solutions close to a single patch (namely, the Kirchhoff ellipse), and later the existence of desingularized non-rigid periodic patches near configurations of four point vortices.

The first work related to this direction is that of Berti--Hassainia--Masmoudi \cite{BHM22}, where the authors study the existence of quasi-periodic solutions close to the Kirchhoff ellipse
\[
D_\gamma := \left\{ (x,y) \in \mathbb{R}^2 : \frac{x^2}{\gamma} + \gamma y^2 = 1 \right\}, 
\qquad \gamma > 1,
\]
which is a rotating solution with angular velocity
\[
\Omega_\gamma := \frac{\gamma}{(1+\gamma)^2}.
\]
The goal of that work is to construct quasi-periodic vortex patches close to this configuration. Recall that a function $f(t)$ taking values in a Banach space $E$ is said to be quasi-periodic if
\[
f(t) = F(\varpi t),
\]
where $\varpi \in \mathbb{R}^\nu$ is a non-resonant frequency vector, that is,
\[
\varpi \cdot \ell \neq 0
\qquad \text{for every } \ell \in \mathbb{Z}^\nu \setminus \{0\}.
\]
The main theorem in \cite{BHM22} reads as follows.
\begin{theo}[Berti--Hassainia--Masmoudi \cite{BHM22}]
Let $\gamma_1<\gamma_2$. For any $\nu\in\N$, there exists a set $\mathcal{C}\subset[\gamma_1,\gamma_2]$ of asymptotically full Lebesgue measure such that, for any $\gamma\in\mathcal{C}$ there exists $\Omega$ close to $\Omega_\gamma$ and, in the rotating frame with angular velocity $\Omega$, a time quasi-periodic vortex patch solution $\omega={\bf 1}_{D(t)}$, with some frequency $\varpi$.
\end{theo}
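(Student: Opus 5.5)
We sketch the KAM/Nash--Moser strategy of \cite{BHM22} for this statement. First, write the boundary of the patch in the rotating frame with angular velocity $\Omega$ as a perturbation of the ellipse $\partial D_\gamma$. We parametrize it through the elliptic coordinates adapted to $D_\gamma$, as a graph $r(t,\theta)$ over $\partial D_\gamma$. The contour dynamics equation then becomes a Hamiltonian PDE
$$
\partial_t r=\partial_\theta \nabla H_\gamma(r),
$$
whose linearization at $r=0$ is a Fourier multiplier in $\theta$. Its eigenvalues $\Omega_n(\gamma)$, $n\geq 1$, come from the explicit Love dispersion relation for the ellipse. These are real, which gives linear stability, and have the asymptotics $\Omega_n(\gamma)\sim n\,\Omega_\gamma+c(\gamma)+O(1/n)$ for a suitable constant $c(\gamma)$ depending on $\gamma$.

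Next, we select $\nu$ tangential modes $n_1<\dots<n_\nu$. The linear quasi-periodic solutions are superpositions of these modes with frequency vector
$$
\varpi_0(\gamma)=(\Omega_{n_1}(\gamma),\dots,\Omega_{n_\nu}(\gamma)).
$$
We check that $\gamma\mapsto\varpi_0(\gamma)$ is nondegenerate. This follows from the analyticity of the Love dispersion relation in $\gamma$ together with a Vandermonde-type argument, and it yields Rüssmann-type transversality conditions for $\varpi_0$ and for the normal frequencies.

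We then introduce action-angle variables on the tangential sites and keep the normal variables $z$. The problem becomes finding an embedded invariant torus
$$
\varphi\mapsto (\theta(\varphi),I(\varphi),z(\varphi))
$$
for a Hamiltonian of the form $\varpi\cdot I+\tfrac12(z,\mathcal L z)+P$. We take the amplitude parameter $\varepsilon$ small and use $\gamma$, together with a counterterm $\alpha$ in the Berti--Bolle framework, as parameters. We solve the resulting functional equation $\mathcal F(i,\alpha)=0$ by a Nash--Moser iteration in Sobolev spaces of $(\varphi,\theta)$. At each step we need an approximate right inverse of the linearized operator at an approximately invariant torus, with tame estimates and a fixed loss of derivatives.

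The main obstacle is this inversion of the linearized operator on the normal directions. That operator is a transport-type pseudodifferential operator,
$$
\omega\cdot\partial_\varphi+\partial_\theta\big(V(\varphi,\theta)\,\cdot\big)+\partial_\theta\mathcal K_\gamma+\text{smoothing},
$$
with a variable-coefficient transport part. Its inversion is plagued by small divisors. We plan to first straighten the transport by a quasi-periodic change of variables $\theta\mapsto\theta+\beta(\varphi,\theta)$; this is a KAM reduction of the transport term, carried out under first-order Melnikov conditions. We then conjugate away the lower-order terms by a pseudodifferential reduction, and finally run a KAM reducibility scheme that diagonalizes the operator to $\omega\cdot\partial_\varphi+i\,\mu_j^\infty$. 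Two ingredients make this work. Second-order Melnikov conditions $|\omega\cdot\ell+\mu_j^\infty-\mu_{j'}^\infty|\geq \kappa\langle\ell\rangle^{-\tau}|j^{\,r}-j'^{\,r}|$ control the small divisors. Tame estimates, based on the Töplitz-in-time structure and the fact that the remainders are regularizing, make the scheme converge.

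Finally, we estimate the measure of the parameter set excluded by all Melnikov conditions. The asymptotics of the perturbed eigenvalues $\mu_j^\infty=\Omega_j(\gamma)+\mathfrak c\,j+\text{small}$, combined with the Rüssmann nondegeneracy, give the bound
$$
|[\gamma_1,\gamma_2]\setminus\mathcal C_\varepsilon|\to0\quad\text{as }\varepsilon\to0.
$$
This provides the set $\mathcal C$ of asymptotically full measure. For $\gamma\in\mathcal C$ we obtain $\Omega$ close to $\Omega_\gamma$ and a quasi-periodic patch ${\bf 1}_{D(t)}$ with frequency $\varpi$ close to $\varpi_0(\gamma)$.
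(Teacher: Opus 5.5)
Your outline follows the route the survey attributes to \cite{BHM22}: recast the patch equation near $D_\gamma$ as a nonlinear functional equation, solve it by Nash--Moser, and invert the transport-type linearized operator, which has small divisors, by KAM reducibility under Melnikov conditions, with measure estimates in $\gamma$. The survey gives no proof beyond this description, so there is nothing finer to compare.

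Two points to tighten. First, Love's relation gives $\Omega_2(\gamma)\equiv 0$ with a nontrivial Jordan block: rotating the ellipse is neutral, while stretching it changes $\Omega_\gamma$. So $n=2$ cannot be a tangential site, and that block cannot be diagonalized to $\omega\cdot\partial_\varphi+i\mu_j^\infty$ as you claim; it has to be handled separately. That is where some extra freedom or symmetry is needed. One such freedom is the angular velocity $\Omega$, which the survey lists among the unknowns but your outline only asserts at the end. Second, the Love frequencies are real only for $\gamma\leq 3$, so your argument needs $[\gamma_1,\gamma_2]$ to lie in the stability range $(1,3)$.
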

Roughly speaking, the authors reduce the problem to finding nontrivial zeros of a functional depending on the perturbation of the angular velocity $\Omega$, the frequency vector $\omega$, and the parametrization of the perturbed domain $D$. However, since this nonlinear equation depends on both time and space, small divisors appear when attempting to invert the associated linearized operator, which leads to a loss of derivatives. For this reason, the authors implement a Nash--Moser scheme and employ tools from KAM theory to overcome these difficulties and invert the linear operator. For more details, we refer the reader to \cite{BHM22}.

The second work \cite{HHM23} by Hassainia--Hmidi--Masmoudi is motivated by the non-rigid periodic evolution of four point vortices in the complex plane. The first pair $z_1$ and $z_2$ are located in the upper half-plane with the same circulation $1$, while the second pair is located at $\overline{z}_1$ and $\overline{z}_2$ with the same negative circulation $-1$. Following the classical work \cite{HM17} discussed in Section \ref{sec:rigidpatch}, they desingularize these point vortices as
\[
\omega_{0,\varepsilon}(t,x)
=
\frac{1}{\pi \varepsilon^2}{\bf 1}_{z_1(t)+\varepsilon D_{t,1}^\varepsilon}(x)
+\frac{1}{\pi \varepsilon^2}{\bf 1}_{z_2(t)+\varepsilon D_{t,2}^\varepsilon}(x)
-\frac{1}{\pi \varepsilon^2}{\bf 1}_{\overline{z_1(t)}+\varepsilon \overline{D_{t,1}^\varepsilon}}(x)
-\frac{1}{\pi \varepsilon^2}{\bf 1}_{\overline{z_2(t)}+\varepsilon \overline{D_{t,2}^\varepsilon}}(x),
\]
for $\varepsilon\in(0,1)$, where $D_{t,k}^\varepsilon$ are domains localized around the unit disc.

They first study the periodic evolution of the four point vortices and show that the dynamics is periodic with period $T(\xi_0)$ provided that
$
\frac{\xi_0}{y_0}<\frac{\sqrt{2}}{2},
$
where $\xi_0$ denotes the distance between the two points $z_1$ and $z_2$, and $y_0$ is related to the initial center of mass. They then construct periodic vortex patches with the same period.

The proof again relies on finding nontrivial zeros of a functional depending on $\varepsilon$ and the parametrization of the domains $D_{t,k}^\varepsilon$. However, the equation becomes degenerate at $\varepsilon=0$, and due to the appearance of small divisors, the authors need to implement a Nash--Moser scheme together with tools from KAM theory in order to invert the linearized operator. Their main result reads as follows.

\begin{theo}[Hassainia--Hmidi--Masmoudi \cite{HHM23}]
Let $y_0>0$ and $0<\xi_1<\xi_2<\frac{y_0}{\sqrt{2}}$. There exists $\varepsilon_0$ such that for any $\varepsilon\in(0,\varepsilon_0)$ there exists a Cantor set $\mathcal{C}_\varepsilon\subset[\xi_1,\xi_2]$ of asymptotically full Lebesgue measure such that, for any $\xi_0\in \mathcal{C}_\varepsilon$, there exist $T(\xi_0)$--periodic domains $D_{k,\varepsilon}^t$ which solve the $2$D Euler equations.
\end{theo}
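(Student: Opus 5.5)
The plan follows the scheme of \cite{HHM23}. First, analyze the point-vortex dynamics. Using the symmetry $z\mapsto\bar z$ with opposite circulations, the four-vortex system reduces to the motion of $z_1,z_2$ in the upper half plane. Pass to the center of mass $z_1+z_2$ and the relative coordinate $\xi=z_1-z_2$. The center translates horizontally at a slowly varying speed. Conservation of the Hamiltonian and of the impulse, in particular of $\operatorname{Im}(z_1+z_2)=2y_0$, reduces the relative motion to a one-degree-of-freedom Hamiltonian system. A phase-plane analysis shows closed orbits (leapfrogging) exactly when $\xi_0/y_0<\sqrt2/2$. Along these orbits one gets the period $T(\xi_0)$ and checks that $\xi_0\mapsto T(\xi_0)$ is smooth and nondegenerate ($T'\neq0$); this will later supply the parameter needed for measure estimates.

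Second, set up the patch problem. Parametrize $\partial D^\varepsilon_{t,k}$ as $(1+\varepsilon r_k(\varphi,\theta))e^{i\theta}$ with $\varphi=\omega t$ and $\omega=2\pi/T(\xi_0)$. Write the contour dynamics equation in the frame moving with $z_k(t)$ (the centers will be corrected by modulation). Using the $\bar z$ symmetry and the imposed symmetry between the two positive patches, the system reduces to a single scalar equation $F(\varepsilon,r)=0$ on $\T^2$. Expanding in $\varepsilon$, the self-interaction gives the Rankine linear operator. In Fourier modes this is $\omega\partial_\varphi+\tfrac{1}{2}\mathcal{H}-\tfrac{1}{2}$-type multipliers, with eigenvalues $\frac{n-1}{2n}\cdot n$. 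The interactions with the other three patches contribute $O(\varepsilon^2)$ time-dependent terms, namely the strain field of the point vortices along the orbit. The degeneracy at $\varepsilon=0$ is handled by rescaling and by imposing that modes $n=1$ match the point-vortex ODE; this is the content of step one.

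Third, and this is the main obstacle: invert the linearized operator at a nontrivial state. It has the form $\omega\partial_\varphi+\partial_\theta(V(\varphi,\theta)\,\cdot)+\partial_\theta\mathcal{K}$, where $V$ is a variable transport coefficient and $\mathcal{K}$ is smoothing. The divisors $\omega\ell+\mu_n$ accumulate near zero, which forces a loss of derivatives. I would proceed in three stages:
\begin{enumerate}
\item straighten the transport part by a quasi-periodic change of variables $\theta\mapsto\theta+\beta(\varphi,\theta)$, which requires Diophantine conditions on $\omega$;
\item reduce the remainder to constant coefficients by a KAM iteration (Berti--Montalto style), producing eigenvalues $\mu_n^\infty(\xi_0,\varepsilon)$;
\item impose second Melnikov conditions $|\omega\ell+\mu_n^\infty-\mu_{n'}^\infty|\geq\gamma\langle\ell\rangle^{-\tau}$.
\end{enumerate}
Taking $\xi_0$ as the parameter, Rüssmann-type transversality from $T'(\xi_0)\neq0$ and the asymptotics $\mu_n\sim n/2$ yields a Cantor set $\mathcal{C}_\varepsilon$ of asymptotically full measure. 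Tame estimates on the inverse with loss of derivatives then feed a Nash--Moser iteration, which converges for $\varepsilon<\varepsilon_0$. Finally, the symmetry of the construction reconstructs $T(\xi_0)$-periodic domains solving Euler.
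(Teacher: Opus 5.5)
Your overall architecture (leapfrogging orbit of period $T(\xi_0)$ for $\xi_0/y_0<\sqrt{2}/2$, desingularization, reducibility plus Nash--Moser, Cantor set in $\xi_0$) matches the outline the paper gives for \cite{HHM23}. However, your linearized problem is mis-scaled, and this hides exactly the degeneracy at $\varepsilon=0$ that the paper singles out.

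Each patch carries vorticity $\frac{1}{\pi\varepsilon^2}$, so its Kelvin-wave frequencies are of size $\frac{|n|-1}{2\pi\varepsilon^2}$, while $\omega=2\pi/T(\xi_0)=O(1)$. Use the normalization of Section~\ref{sec:mult}, where the self-interaction linearizes to $\frac{1}{2\pi}(h'+\mathcal{H}[h])$ and the rotation acting on the shape appears as $-\Omega\varepsilon^2 f'$. There the time derivative enters as $\varepsilon^2\omega\partial_\varphi h$. It is therefore of the same order as the strain of the other vortices, not of order one as in your operator $\omega\partial_\varphi+\tfrac12\mathcal{H}-\tfrac12$.

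Consequences:
\begin{itemize}
\item At $\varepsilon=0$ the linearized operator acts only in $\theta$. Its kernel contains every $a_0(\varphi)+a(\varphi)\cos\theta+b(\varphi)\sin\theta$. This infinite-dimensional degeneracy cannot be removed by finitely many extra unknowns, the way the single mode $\cos\theta$ was removed by $\Omega$ in Section~\ref{sec:mult}.
\item After dividing by $\varepsilon^2$, the unperturbed divisors on $|n|\geq2$ are $\omega\ell\pm\frac{|n|-1}{2\pi\varepsilon^2}$, so they move with $\varepsilon$ already at leading order.
\item Straightening the transport part needs a Diophantine condition on the pair formed by $\omega$ and the unknown averaged transport speed, which is of size $\varepsilon^{-2}$. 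A Diophantine condition on the single frequency $\omega$ alone is vacuous.
\item Besides the second Melnikov conditions, you also need the first ones, $|\omega\ell+\mu_n^\infty|\geq\gamma\langle\ell\rangle^{-\tau}$, to invert the reduced operator.
\end{itemize}

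The missing ingredient is the analysis of the modes $|n|\leq1$. There the first non-trivial part of the linearized operator appears at order $\varepsilon^2$ and is essentially $\varepsilon^2\big(\omega\partial_\varphi-A(\varphi)\big)$. Here $A(\varphi)$ is the Jacobian of the point-vortex vector field along the leapfrogging orbit, and the mode $n=0$ is controlled by conservation of area. Saying that these modes ``match the point-vortex ODE'' and that this is the content of step one is not enough. Step one gives the orbit and $T'\neq0$, but each Nash--Moser step requires inverting (a perturbation of) $\omega\partial_\varphi-A(\varphi)$ on $T(\xi_0)$-periodic functions, and this operator is not invertible. The derivative $\dot z(t)$ of the orbit and the horizontal translation of the configuration lie in its kernel. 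The gradients of the Hamiltonian and of the impulse along the orbit give obstructions in its cokernel.

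These directions must be eliminated. One way is to work in a reversible class (time reversal combined with $z\mapsto-\bar z$, a symmetry of the orbit when $z_1,z_2$ start on the imaginary axis), where these directions have the wrong parity. The same structure is what keeps the eigenvalues $\mu_n^\infty$ real in your KAM reduction. Alternatively, or in addition, you can add free parameters such as the speed of the translating frame, the time-dependent analogue of treating $\Omega$ as an unknown in Section~\ref{sec:mult}. Finally, relative to the modes $|n|\geq2$, the inverse of this block loses a factor $\varepsilon^{-2}$, which the Nash--Moser scheme must absorb, e.g.\ through a sufficiently accurate approximate solution. Without this block, your three stages only invert the operator on $|n|\geq2$ and do not produce an approximate inverse of the full linearized operator.
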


Let us also refer the reader to other related works on the two-dimensional Euler equations and other two-dimensional systems \cite{R21,R22,HR22,HR21,HHR23,GSIP23,HHM21}. Finally, let us mention the recent work \cite{GHH26} concerning leapfrogging motion for the $3$D Euler equations.

\section{Rigid periodic simply connected vortex patches}\label{sec:rigidpatch}

In this section we review the proof of Burbea \cite{B82} and Hmidi--Mateu--Verdera \cite{HMV13} concerning the existence of rotating vortex patches close to the Rankine vortex (the circular patch). The approach presented here is inspired by both works; however, in contrast with \cite{B82,HMV13}, we do not use conformal mappings to parametrize the boundary of the patch.

Our goal is to prove the existence of an angular velocity $\Omega\in\mathbb{R}$ and a bounded domain $D\subset\mathbb{R}^2$ such that the associated vortex patch is a rotating solution with angular velocity $\Omega$, that is:
$$
\omega_0={\bf 1}_D,\quad \omega(t,\cdot)={\bf 1}_{R_{\Omega t}D}.
$$
Since we seek rotating patch solutions, we can make use of equation \eqref{eq:rotating}, derived in Section~\ref{sec:intro}:
$$
(u_0(x)-\Omega x^\perp)\cdot n(x)=0, \quad x\in\partial D,
$$
where $n$ is a normal vector to $\partial D$. Since we aim to apply the Crandall--Rabinowitz theorem, we look for solutions close to a trivial one, which in our setting is the Rankine vortex corresponding to the unit disc $D=\D$. We therefore parametrize the domain $D$ so that it can be viewed as a perturbation of the disc.  Motivated by recent works on quasi-periodic patches and non-rigid periodic patches, see \cite{BHM22, HHM23} and references therein, and in contrast with the conformal mapping approach used in \cite{B82,HMV13}, we adopt the following parametrization of the boundary $\partial D$:
$$
z(\theta)=\sqrt{1+2 f(\theta)}e^{i\theta},\quad w(\theta):=\sqrt{1+2 f(\theta)}.
$$ 
Inserting such parametrization in the above equation we arrive at:
$$
F(\Omega,f)(\theta):=\Omega f'(\theta)-\frac{\partial_\theta}{2\pi}\int_0^{2\pi}\int_0^{w(\eta)}\ln(|w(\theta) e^{i\theta}-r e^{i\eta}|)r dr d\eta=0.
$$
Now, the problem reduces to finding nontrivial zeros of the previous nonlinear and nonlocal functional $F$, which puts us in the appropriate framework to apply bifurcation techniques. Before stating our main result, let us introduce the function spaces that will be used throughout this section. Take $\alpha\in(0,1)$ and define the following H\"older spaces:
\begin{align}\label{funcionspaces}
    X^{k,\alpha}_m:=\{ f\in C^{k,\alpha}(\T), \quad f(\theta)=\sum_{n\in m\N}f_n \cos(n\theta)\},\\
     Y^{k,\alpha}_m:=\{ f\in C^{k,\alpha}(\T), \quad f(\theta)=\sum_{n\in m\N}f_n \sin(n\theta)\},
\end{align}
where the parameter $m$ stands for the $m$-fold symmetry of the domain.

Then, the main result stated in \cite{B82,HMV13} reads as follows.
\begin{theo}[Burbea \cite{B82}, Hmidi--Mateu--Verdera \cite{HMV13}]\label{th:burbea}
Let $m\geq 2$. There exists $\varepsilon>0$, and a map $\xi\in (-\varepsilon,\varepsilon)\mapsto (\Omega_\xi, f_\xi)\in \R\times X_m^{1,\alpha}$ such that
$$
F(\Omega_\xi, f_\xi)=0, \quad \forall \xi\in (-\varepsilon,\varepsilon).
$$
Furthermore $\Omega_\xi=\frac{m-1}{2m}+O(\xi)$ and $f_\xi=\xi \cos(m\theta)+O(\xi^2).$
\end{theo}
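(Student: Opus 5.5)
We plan to apply the Crandall--Rabinowitz theorem (Theorem~\ref{th:CR}) to $F:\R\times X_m^{1,\alpha}\to Y_m^{0,\alpha}$, restricting to a small ball $\|f\|_{C^{1,\alpha}}<\delta$ so that $w=\sqrt{1+2f}$ is well defined and the boundary curve is a small perturbation of the circle.

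\textbf{Step 1: structure of $F$.} The trivial branch is immediate. For $f=0$ the disc is stationary, so $F(\Omega,0)=\Omega\cdot 0-\partial_\theta(\text{const})=0$ for every $\Omega$.

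\textbf{Step 2: symmetry.} We check that $F$ maps $X_m^{1,\alpha}$ into $Y_m^{0,\alpha}$. If $f$ is even and $2\pi/m$-periodic, the reflection $\theta\mapsto-\theta$, $\eta\mapsto-\eta$ and the rotation $\theta\mapsto\theta+2\pi/m$ leave the double integral invariant. The integral term is therefore even and $2\pi/m$-periodic, and its $\theta$-derivative is odd with $2\pi/m$-periodicity, as is $\Omega f'$.

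\textbf{Step 3: regularity.} To obtain $C^{0,\alpha}$ regularity and the $C^1$ dependence required in hypothesis (2), we first compute the $r$-integral explicitly or differentiate under the integral. This rewrites the nonlocal term as
$$
\frac{1}{2\pi}\int_0^{2\pi}\ln|z(\theta)-z(\eta)|\,\partial_\eta\bigl(\cdots\bigr)d\eta
$$
plus smooth terms, which is the contour-dynamics form. The resulting operators have logarithmic-type kernels $\ln|z(\theta)-z(\eta)|$, where the chord-arc condition $|z(\theta)-z(\eta)|\geq c|\sin\frac{\theta-\eta}{2}|$ holds for small $f$. Classical H\"older estimates for such singular integral operators then give that $F$, $\partial_fF$ and $\partial_\Omega\partial_fF$ are continuous. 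I expect this step to be the most technical: it requires care with the $\theta$-derivative falling on the log kernel, which produces a Cauchy-type operator bounded on $C^{0,\alpha}$ but not on $C^0$.

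\textbf{Step 4: linearization.} Next we compute $\mathcal L(\Omega)=\partial_fF(\Omega,0)$. Since $w\approx1+f$, linearizing the stream function of the patch at the circle gives $\psi\approx\psi_{\D}+$(single-layer potential of density $f$). On the circle, $\partial_r\psi_{\D}=\tfrac12$, and the single-layer potential of $\cos(n\eta)$ on the unit circle equals $-\frac{1}{2n}\cos(n\theta)$. This gives
$$
\mathcal L(\Omega)[\cos(n\theta)]=-n\Big(\Omega-\frac{n-1}{2n}\Big)\sin(n\theta),
$$
which we will verify carefully for sign conventions. Because $\frac{n-1}{2n}$ is strictly increasing in $n$, at $\Omega_m=\frac{m-1}{2m}$ the kernel of $\mathcal L(\Omega_m)$ on $X_m^{1,\alpha}$ is exactly spanned by $\cos(m\theta)$.

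\textbf{Step 5: Fredholm property and range.} The operator has the form $\mathcal L(\Omega_m)=(\Omega_m-\tfrac12)\partial_\theta+\tfrac12\mathcal H$, where $\mathcal H$ acts as the Hilbert-type operator $\cos(n\theta)\mapsto \sin(n\theta)$ and is bounded on $C^{0,\alpha}$. Since $\Omega_m-\tfrac12=-\frac1{2m}\neq0$, the operator is a compact-type perturbation of an invertible multiple of $\partial_\theta:X_m^{1,\alpha}\to Y_m^{0,\alpha}$. It is therefore Fredholm of index zero, with range equal to
$$
\Big\{g\in Y_m^{0,\alpha}: \int_0^{2\pi} g(\theta)\sin(m\theta)\,d\theta=0\Big\}.
$$
Inverting on the other modes, with multipliers $\sim 1/n$, gives a bounded map $C^{0,\alpha}\to C^{1,\alpha}$ by the same argument.

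\textbf{Step 6: transversality.} We have $\mathcal L'(\Omega_m)[\cos(m\theta)]=-m\sin(m\theta)$, which is not in the range. Theorem~\ref{th:CR} now yields the curve $(\Phi(\xi),\,\xi\cos(m\theta)+\xi\beta(\xi))$ with $\beta(\xi)\to0$.

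\textbf{Step 7: expansions.} The refined asymptotics $\Omega_\xi=\Omega_m+O(\xi)$ and $f_\xi=\xi\cos(m\theta)+O(\xi^2)$ follow from the $C^2$ smoothness of $F$, which the same kernel estimates give. With that smoothness, $\Phi$ and $\beta$ are $C^1$, and $\beta(0)=0$ implies $\xi\beta(\xi)=O(\xi^2)$.
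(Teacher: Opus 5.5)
Your proposal is correct and follows the paper's proof step for step: Crandall--Rabinowitz for $F:\R\times X_m^{1,\alpha}\to Y_m^{0,\alpha}$, the same symmetry check, the same linearization $\cos(n\theta)\mapsto -n(\Omega-\frac{n-1}{2n})\sin(n\theta)$, Fredholm index zero as a compact perturbation of $(\Omega_m-\frac12)\partial_\theta$, and transversality via $-m\sin(m\theta)$; the only real difference is that you get the regularity from the contour-dynamics (boundary-integral) form rather than from the paper's area-integral kernel bounds combined with Lemma~\ref{lem:potentialtheory}. Your Step 7 usefully makes explicit that the stated $O(\xi)$ and $O(\xi^2)$ asymptotics need $C^2$ regularity of $F$ (the merely continuous $\Phi,\beta$ of Theorem~\ref{th:CR} give only $o(1)$ and $o(\xi)$), a point the paper passes over; separately, fix a harmless sign slip in Step 5: with $\mathcal H:\cos(n\theta)\mapsto\sin(n\theta)$ the operator is $(\Omega_m-\frac12)\partial_\theta-\frac12\mathcal H$, as your own Step 4 formula requires.
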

\begin{proof}
    The proof is an application of the Crandall-Rabinowitz theorem stated in Theorem \ref{th:CR}. Then, we shall check that all the hypotheses are satisfied:

\medskip \noindent  $\diamond$ Hypothesis (i):
  \medskip

  Note that
    \begin{align*}
        F(\Omega,0)=-\frac{\partial_\theta}{2\pi}\int_0^{2\pi}\int_0^1 \ln(|e^{i\theta}-r e^{i\eta}|)rdr d\eta.
    \end{align*}
    The modulus can be written as
    $$
    |e^{i\theta}-r e^{i\eta}|^2=1+r^2-2r\cos(\theta-\eta),
    $$
    and then we can do the following change of variables in the integral
        \begin{align*}
        F(\Omega,0)=&-\frac{\partial_\theta}{4\pi}\int_0^{2\pi}\int_0^1 \ln(1+r^2-2r\cos(\theta-\eta))rdr d\eta\\
        =&-\frac{\partial_\theta}{4\pi}\int_0^{2\pi}\int_0^1 \ln(1+r^2-2r\cos(\eta))rdr d\eta\\
        =&0,
    \end{align*}
    which is clearly vanishing for any $\Omega$ due to $\partial_\theta$.

\medskip \noindent  $\diamond$ Hypothesis (ii): \medskip

Take $\varepsilon<1$ and define $B_{X^{1,\alpha}_m}(\varepsilon)$ to be the ball of $X^{1,\alpha}_m$ centered at 0 of radius $\varepsilon$. We define the functional $F$ using the following spaces:
$$
F:\R\times B_{X^{1,\alpha}_m}(\varepsilon)\rightarrow Y^{0,\alpha}_m.
$$
We shall check that it is well-defined and $C^1$. For simplicity, define
\begin{equation}\label{defI}
I(f)(\theta):=\int_0^{2\pi}\int_0^{w(\eta)}\ln(|w(\theta) e^{i\theta}-r e^{i\eta}|)r dr d\eta.
\end{equation}
We begin by checking the symmetry in the spaces. Consider that $f\in B_{X^{1,\alpha}_m}(\varepsilon) $, then $f$ verifies $f(-\theta)=f(\theta)$ and $f'(-\theta)=-f'(\theta)$. In order to check that $F$ can be written as a Fourier series in terms of sines, we just need to check that $F(\Omega,f)$ is odd:
\begin{align*}
F(\Omega,f)(-\theta)
=&-\Omega f'(\theta)-\frac{1}{2\pi}\partial_\theta (I(f))(-\theta).
\end{align*}
Let us check that $I$ is even, which will imply that $F$ is odd:
\begin{align*}
I(f)(-\theta)=&\int_0^{2\pi}\int_0^{w(\eta)}\ln(|w(\theta) e^{-i\theta}-r e^{i\eta}|)r dr d\eta=\int_0^{2\pi}\int_0^{w(\eta)}\ln(|w(\theta) e^{i\theta}-r e^{-i\eta}|)r dr d\eta.
\end{align*}
Now we do the change of variables $\eta\mapsto -\eta$ inside the integral to find
\begin{align*}
I(f)(-\theta)
=&\int_0^{2\pi}\int_0^{w(\eta)}\ln(|w(\theta) e^{i\theta}-r e^{i\eta}|)r dr d\eta=I(f)(\theta),
\end{align*}
and then $I$ is even, which implies that $\partial_\theta I$ is odd and then $F$ is odd.

Now, let us check the $m$-fold symmetry. From the space $X_m^{1,\alpha}$, one has that
$$
f(\theta+\tfrac{2\pi}{m})=f(\theta).
$$
Let us check the same symmetry for $F$:
\begin{align*}
F(\Omega,f)(\theta+\tfrac{2\pi}{m})
=&\Omega f'(\theta+\tfrac{2\pi}{m})-\frac{1}{2\pi}\partial_\theta (I(f))(\theta+\tfrac{2\pi}{m})\\
=&\Omega f'(\theta)-\frac{1}{2\pi}\partial_\theta (I(f)(\theta+\tfrac{2\pi}{m})).
\end{align*}
Let us now check $I$:
\begin{align*}
I(f)(\theta+\tfrac{2\pi}{m})=&\int_0^{2\pi}\int_0^{w(\eta)}\ln(|w(\theta+\tfrac{2\pi}{m}) e^{i(\theta+\tfrac{2\pi}{m})}-r e^{i\eta}|)r dr d\eta\\
=&\int_0^{2\pi}\int_0^{w(\eta)}\ln(|w(\theta) e^{i\theta}-r e^{i(\eta-\tfrac{2\pi}{m})}|)r dr d\eta\\
=&\int_0^{2\pi}\int_0^{w(\eta+\tfrac{2\pi}{m})}\ln(|w(\theta) e^{i\theta}-r e^{i\eta}|)r dr d\eta\\
=&I(f)(\theta),
\end{align*}
which ends the proof of the $m$-fold symmetry of $F$. It remains to check the regularity properties. The first term is smooth in $(\Omega,f)$ and it remains to check the regularity with respect to $f$ of the second term. Indeed, we shall check that $I(f)\in C^{1,\alpha}$ and $f\mapsto I'(f)$ is continuous.

To check that $I(f)\in C^{1,\alpha}$, let us write $I(f)$ in the following way:
$$
I(f)(\theta)=\int_0^{2\pi}\int_0^1\ln(|w(\theta) e^{i\theta}-r w(\eta) e^{i\eta}|)w^2(\eta) r dr d\eta.
$$
Next, take the derivative in $\theta$ and compute the expression for the modulus obtaining
\begin{equation}\label{Idtheta}
\partial_\theta I(f)(\theta)=\int_0^{2\pi}\int_0^1K(f)(r,\theta,\eta)w^2(\eta) r dr d\eta,
\end{equation}
with
\begin{align*}
&K(f)(r,\theta,\eta):=\\
&\frac{w'(\theta)(w(\theta)-rw(\eta))+2rw'(\theta)w(\eta)\sin^2((\theta-\eta)/2)+4rw(\theta)w(\eta)\sin((\theta-\eta)/2)\cos((\theta-\eta)/2)}{(w(\theta)-r w(\eta))^2+4rw(\theta)w(\eta)\sin^2((\theta-\eta)/2)}.
\end{align*}
In order to prove that $\partial_\theta I(f)\in C^{\alpha}$ we use classical tools from potential theory. For the completeness of this survey, we provide in Appendix \ref{sec:appendix} a lemma concerning the type of integral operators we will be dealing with, together with its estimate in H\"older spaces and the proof.  Then, we need to estimate $K$ in order to use Lemma \ref{lem:potentialtheory}. Let us first estimate the denominator. Note that
\begin{align*}
 ((w(\theta)-r w(\eta))^2+&4rw(\theta)w(\eta)\sin^2((\theta-\eta)/2))^{\frac12} 
 \geq C(|w(\theta)-r w(\eta)|+\sqrt{r}|\sin((\theta-\eta)/2)|),
\end{align*}
where $C$ denotes a constant that may change from line to line. Now, note that
\begin{align*}
    |w(\theta)-r w(\eta)|=&|(1-r)w(\theta)+r(w(\theta)- w(\eta))|
    \geq C(|1-r|-r\varepsilon|\sin((\theta-\eta)/2)|),
\end{align*}
or
\begin{align*}
    |w(\theta)-r w(\eta)|=&|(w(\theta)-w(\eta))+(1-r)w(\eta)|
    \geq C(|1-r|-\varepsilon|\sin((\theta-\eta)/2)|).
\end{align*}
By interpolating both inequalities we find
\begin{align*}
    |w(\theta)-r w(\eta)|
    \geq &C(|1-r|-\varepsilon \sqrt{r} |\sin((\theta-\eta)/2)|).
\end{align*}
Now, we come back to the denominator of $K$ to find
\begin{align*}
 ((w(\theta)-r w(\eta))^2+&4rw(\theta)w(\eta)\sin^2((\theta-\eta)/2))^{\frac12} 
 \geq C(|w(\theta)-r w(\eta)|+\sqrt{r}|\sin((\theta-\eta)/2)|)\\
 \geq &C(|1-r|+\sqrt{r}(1-\varepsilon)|\sin((\theta-\eta)/2)|)\\
 \geq &C(|1-r|+\sqrt{r}|\sin((\theta-\eta)/2)|).
\end{align*}
Using
$$
|w(\theta)-r w(\eta)|\leq C(|1-r|+r|\sin((\theta-\eta)/2)|),
$$
then, note that $K$ can be bounded by
\begin{align*}
|K(r,\theta,\eta)|\leq& C \frac{|1-r|+r|\sin((\theta-\eta)/2)|}{|1-r|^2+r\sin^2((\theta-\eta)/2)}\\
\leq& C\frac{1}{|1-r|+\sqrt{r}|\sin((\theta-\eta)/2)|}\\
\leq& \frac{C}{|1-r|^{1-\alpha} r^\frac{\alpha}{2}|\sin((\theta-\eta)/2)|^\alpha},
\end{align*}
where the final $C$ depends on $||f||_{X^{1,\alpha}_m}$, and we have interpolated with $\alpha\in(0,1)$. Similar estimates yield
\begin{align*}
|\partial_\theta K(r,\theta,\eta)|\leq& C\frac{1}{(|1-r|+\sqrt{r}|\sin((\theta-\eta)/2)|)^2}\leq C\frac{1}{|1-r|^\alpha r^{\frac{2-\alpha}{2}}|\sin((\theta-\eta)/2)|^{2-\alpha}},
\end{align*}
with $\alpha\in (0,1)$. Then applying Lemma \ref{lem:potentialtheory} we have 
$$
||\partial_\theta I(f)||_{C^\alpha}\leq C||w^2||\leq C,
$$
obtaining that $I(f)\in C^{1,\alpha}$, and then $F$ is well-defined.

Let us now check that the Fr\'echet derivative is continuous. For that, we can directly study $\partial_\theta \partial_f I$, and we can interchange the derivatives and use the expression \eqref{Idtheta}:
\begin{align*}
\partial_f \partial_\theta I(f)h(\theta)=2\int_0^{2\pi}\int_0^1K(f)(r,\theta,\eta)h(\eta) r dr d\eta+\int_0^{2\pi}\int_0^1\partial_f K(f)h(r,\theta,\eta)w^2(\eta) r dr d\eta.
\end{align*}
In order to check the continuity in $f$ we can prove that there exists $\gamma>0$ such that
$$
||\partial_f \partial_\theta I(f_1)h-\partial_f \partial_\theta I(f_2)h||_{C^\alpha}\leq C||h||_{C^{1,\alpha}}||f_1-f_2||_{C^{1,\alpha}}^\gamma,
$$
which comes from studying each term separately and use Lemma \ref{lem:potentialtheory}, and $\gamma$ measures the modulus of continuity. To illustrate it, let us just take one the terms, for instance:
$$
I_1(f)=\int_0^{2\pi}\int_0^1\frac{w'(\theta)(w(\theta)-rw(\eta))}{(w(\theta)-r w(\eta))^2+4rw(\theta)w(\eta)\sin^2((\theta-\eta)/2)}h(\eta) r dr d\eta,
$$
and
we aim to study
$$
I_1(f_1)-I_1(f_2)=\int_0^{2\pi}\int_0^1K_1(f_1,f_2)(r,\theta,\eta) h(\eta) r dr d\eta
$$
and 
\begin{align*}
K_1(f)(r,\theta,\eta):=&\frac{w'_1(\theta)(w_1(\theta)-rw_1(\eta))}{(w_1(\theta)-r w_1(\eta))^2+4rw_1(\theta)w_1(\eta)\sin^2((\theta-\eta)/2)}\\
&-\frac{w'_2(\theta)(w_2(\theta)-rw_2(\eta))}{(w_2(\theta)-r w_2(\eta))^2+4rw_2(\theta)w_2(\eta)\sin^2((\theta-\eta)/2)},
\end{align*}
where $w_j=\sqrt{1+2f_j}$. As we said, we aim to use Lemma \ref{lem:potentialtheory}, and for that we need to estimate $K_1$. For that we add and substract appropriate terms as:

\begin{align*}
&K_1(f)(r,\theta,\eta):=\frac{(w'_1(\theta)-w_2'(\theta))(w_1(\theta)-rw_1(\eta))+w_2'(\theta)(w_1(\theta)-rw_1(\eta)-w_2(\theta)-rw_2(\eta))}{(w_1(\theta)-r w_1(\eta))^2+4rw_1(\theta)w_1(\eta)\sin^2((\theta-\eta)/2)}\\
&+((w_2(\theta)-r w_2(\eta))^2-(w_1(\theta)-r w_1(\eta))^2+4r(w_2(\theta)w_2(\eta)-w_1(\theta)w_1(\eta))\sin^2((\theta-\eta)/2))\\
&\times\frac{w'_2(\theta)(w_2(\theta)-rw_2(\eta))}{((w_2(\theta)-r w_2(\eta))^2+4rw_2(\theta)w_2(\eta)\sin^2((\theta-\eta)/2))}\\
&\times\frac{1}{((w_1(\theta)-r w_1(\eta))^2+4rw_1(\theta)w_1(\eta)\sin^2((\theta-\eta)/2))}.
\end{align*}
By using the estimates on the denominator, after several computations, we can estimate each term obtaining
\begin{align*}
|K(r,\theta,\eta)|
\leq& \frac{C||f_1-f_2||^\gamma_{C^{1,\alpha}}}{|1-r|^{1-\alpha} r^\frac{\alpha}{2}|\sin((\theta-\eta)/2)|^\alpha},\\
|\partial_\theta K(r,\theta,\eta)|
\leq& \frac{C||f_1-f_2||^\gamma_{C^{1,\alpha}}}{|1-r|^{\alpha} r^\frac{2-\alpha}{2}|\sin((\theta-\eta)/2)|^{2-\alpha}},
\end{align*}
for some $\gamma>0$ and then using Lemma \ref{lem:potentialtheory} we obtain 
$$
||\partial_f \partial_\theta I_1(f_1)h-\partial_f \partial_\theta I_2(f_2)h||_{C^\alpha}\leq C||h||_{C^{1,\alpha}}||f_1-f_2||_{C^{1,\alpha}}^\gamma.
$$
Here we have just provided an sketch of how to check the $C^1$ regularity of the functional, but we refer to the reader to \cite{HMV13, HM17, GHS20, GHM23} and references therein for more details.

\medskip \noindent  $\diamond$ Hypothesis (iii): \medskip

Here, we move to the spectral properties of the operator. Using the expression of $I(f)$ in \eqref{defI} we have
$$
\partial_f F(\Omega,0)h(\theta)=\Omega h'(\theta)-\mathcal{H}[h](\theta)-\frac{\partial_\theta}{2\pi}\int_0^{2\pi}\int_0^1 \frac{(e^{i\theta}-r e^{i\eta})\cdot h(\theta) e^{i\theta}}{|e^{i\theta}-re^{i\eta}|^2}r dr d\eta,
$$
where $\mathcal{H}$ stands for the Hilbert transform defined as
\begin{equation}\label{hilberttransform}
\mathcal{H}[h](\theta):=\frac{\partial_\theta}{\pi}\int_0^{2\pi}\ln(|e^{i\theta}-e^{i\eta}|)h(\eta) d\eta.
\end{equation}
The last integral can be computed explicitly obtaining
$$
\partial_f F(\Omega,0)h(\theta)=\left(\Omega-\frac12\right) h'(\theta)-\frac12 \mathcal{H}[h](\theta).
$$
If $\Omega\neq \frac12$ one has that the operator
$$
h\in X^{1,\alpha}_m\mapsto \left(\Omega-\frac12\right)h'(\theta)\in Y^{0,\alpha}_m,
$$
is an isomorphism, and
$$
h\in X^{1,\alpha}_m\mapsto \mathcal{H}[h]\in Y^{0,\alpha}_m,
$$
is compact. Now, we use that compact perturbations of isomorphisms give us a Fredholm operator of index 0. Thus, our operator is Fredholm of index 0, and we can easily study its kernel by using Fourier series. Take $h$ as 
$$
h(\theta)=\sum_{n\in m\N}h_n\cos(n\theta),
$$
and then
$$
\partial_f F(\Omega,0)h(\theta)=-\sum_{n\in m\N}h_n n\sin(n\theta)\left(\Omega-\frac{n-1}{2n}\right).
$$
By defining $\Omega_m=\frac{m-1}{2m}$ one has that
$$
\textnormal{Ker }(\partial_f F(\Omega_m,0))=<\cos(m\theta)>,
$$
and
$$
Y^{0,\alpha}/\textnormal{Range }(\partial_f F(\Omega_m,0))=<\sin(m\theta)>.
$$

\medskip \noindent  $\diamond$ Hypothesis (iv): \medskip

For the transversality condition, we just compute the mixed derivative as:
\begin{align*}
    \partial_\Omega \partial_f F(\Omega,0)h(\theta)=-\sum_{n\in m\N}h_n n\sin(n\theta),
\end{align*}
and evaluate at $\Omega_m$ and the element of the kernel:
\begin{align*}
    \partial_\Omega \partial_f F(\Omega_m,0)(\cos(m\theta)(\theta)=- m\sin(m\theta),
\end{align*}
which is clearly not in the range of $\partial_f F(\Omega_m,0)$. Then, applying Crandall-Rabinowitz theorem we arrive to the proof of this theorem.
\end{proof}

\section{Rigid periodic disjoint vortex patches}\label{sec:mult}
In this section we study a periodic evolution generated by two interacting vortex patches. More precisely, we review the work of Hmidi and Mateu \cite{HM17} on the existence of corotating vortex pairs. We present a simplified proof that avoids the use of conformal mappings and incorporates new ideas developed in \cite{HW22} to handle certain degeneracies in the linearized operator. Similar techniques have also been used in related works concerning the existence of other vortex configurations, such as the Thomson polygon, the K\'arm\'an vortex street, asymmetric vortex pairs and more general configurations \cite{G21,G20,HW22, HH21}.

Here, we aim to find $\varepsilon>0$, $\Omega\in\R$ and  $D \subset \R^2$ such that 
 $$
\omega_{0,\varepsilon}=\frac{1}{\pi \varepsilon^2}{\bf 1}_{P_1+\varepsilon D}+\frac{1}{\pi \varepsilon^2}{\bf 1}_{P_2-\varepsilon D},\quad \omega_\varepsilon(t,x)=\omega_{0,\varepsilon}(R_{-\Omega t} x).
$$
Here $P_1$ and $P_2$ represents the point vortices that we aim to desingularize. If $|D|=|\D|$ we find that for small $\varepsilon$, the vorticity $\omega_{0,\varepsilon}$ converges in the distribution sense to two point vortices:
$$
\omega_{0,0}=\delta_{P_1}+\delta_{P_2}.
$$
We will see later how the rigid motion of the point vortices naturally appears, since the desingularization procedure will be performed from $\Omega=\Omega_0$, where $\Omega_0$ denotes the angular velocity of the corresponding two point vortices. For simplicity, we assume that the points $P_j$ are initially located on the real axis. In that case,
$$
P_1=(l,0),\quad P_2=(-l,0), \quad l\in\R.
$$
Inserting our ansatz in \eqref{eq:rotating} we find the following system of equations:
\begin{align*}
    (u_{0,\varepsilon}(x)-\Omega x^\perp)\cdot n_1(x)=&0, \quad x\in\partial D_1,\\
    (u_{0,\varepsilon}(x)-\Omega x^\perp)\cdot n_2(x)=&0, \quad x\in\partial D_2,
\end{align*}
where $n_j$ stands for a normal vector to $\partial D_j$. That system is a nonlinear and nonlocal coupled system since $u_{0,\varepsilon}$ depends on both domains. Now, we parametrize $\partial D$ as:
$$
z(\theta)=\sqrt{1+2\varepsilon f(\theta)}e^{i\theta},\quad w(\theta):=\sqrt{1+2\varepsilon f(\theta)}.
$$
Inserting such a parametrization into the system, we realize that the second equation is equivalent to the first one and one has just to study one of them (this is due to the symmetry we have imposed in the domains). Then, after some computations, we arrive to the following nonlinear equation for $(\varepsilon,\Omega, f)$:
\begin{align*}
F(\varepsilon,\Omega, f)(\theta):=&\Omega l \sin(\theta) w(\theta)-\Omega l\cos(\theta)\varepsilon w^{-1}(\theta) f'(\theta)-\Omega \varepsilon^2 f'(\theta)\\
&+\frac{\partial_\theta}{2\pi^2 \varepsilon}\int_0^{2\pi}\int_0^{w(\eta)}\ln(|w(\theta)e^{i\theta}-r e^{i\eta}|) rdr d\eta \\
&+\frac{\partial_\theta}{2\pi^2 \varepsilon}\int_0^{2\pi}\int_0^{w(\eta)}\ln(|2l+\varepsilon(w(\theta) e^{i\theta}+r e^{i\eta})|) r dr d\eta=0.
\end{align*}
In the following, let us state the main theorem in \cite{HM17}.
\begin{theo}[Hmidi-Mateu \cite{HM17}]
There exists $\varepsilon_0$, such that for any $\varepsilon\in(0,\varepsilon_0)$ there is a map $\varepsilon\mapsto (\Omega_\varepsilon, f_\varepsilon)\in\R\times X^{1,\alpha}_1$ such that
$$
F(\varepsilon,\Omega_\varepsilon,f_\varepsilon)=0.
$$
Moreover, $\Omega_0=\frac{1}{4\pi l^2}$ stands for the evolution of the two point vortices located at $(l,0)$ and $(-l,0)$.
\end{theo}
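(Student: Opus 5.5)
We will use the Implicit Function theorem (Theorem \ref{th:IFT}) with $\varepsilon$ as the parameter. The Crandall--Rabinowitz theorem is not the right tool here, because there is no trivial branch in $\varepsilon>0$. The unknowns will be $(\Omega,f)$, and we will work near $(\varepsilon,\Omega,f)=(0,\Omega_0,f_0)$ for a suitable $f_0$.

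\medskip\noindent\emph{Step 1: expansion in $\varepsilon$.} We first extend $F$ to $\varepsilon\le 0$ and expand it in $\varepsilon$.
\begin{itemize}
\item For the self-interaction term, write $I(\varepsilon f)$ as in \eqref{defI} with $w=\sqrt{1+2\varepsilon f}$. Using the computation of Section~\ref{sec:rigidpatch} (Hypothesis (i) and the linearization), the term is $\frac{1}{2\pi^2\varepsilon}\partial_\theta I(\varepsilon f)=-\frac{1}{2\pi}\big(f'+\mathcal H[f]\big)+\varepsilon\,\mathcal R_1(\varepsilon,f)$. The zeroth-order piece vanishes.
\item For the interaction term, $\ln|2l+\varepsilon(\cdots)|$ is smooth for small $\varepsilon$. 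Taylor expansion gives $\frac{1}{\pi\varepsilon}\partial_\theta\big[\pi\ln(2l)+\varepsilon\,\mathrm{Re}\,\frac{w(\theta)e^{i\theta}}{2l}\pi+\dots\big]$. The leading contribution is $-\frac{1}{4\pi l}\sin\theta+O(\varepsilon)$.
\item The remaining terms are $\Omega l\sin\theta\, w=\Omega l\sin\theta+O(\varepsilon)$, plus terms that are $O(\varepsilon)$.
\end{itemize}
Hence
$$F(0,\Omega,f)=\Big(\Omega l-\frac{1}{4\pi l}\Big)\sin\theta-\frac{1}{2\pi}\big(f'+\mathcal H[f]\big),$$
with the normalizing constants adjusted to match the prefactor $\frac{1}{2\pi^2}$. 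The mode $n=1$ is exactly where $f'+\mathcal H f$ vanishes: $\cos\theta$ is in its kernel, since $\mathcal H[\cos n\theta]=-\sin n\theta$ up to the convention used above. So $F(0,\Omega,f)=0$ forces $\Omega=\Omega_0=\frac{1}{4\pi l^2}$, and $f$ has no modes $n\ge2$. We take $f_0=0$.

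\medskip\noindent\emph{Step 2: regularity.} We check that $F:(-\varepsilon_0,\varepsilon_0)\times\R\times B_{X_1^{1,\alpha}}\to Y_1^{0,\alpha}$ is well defined and $C^1$.
\begin{itemize}
\item Parity is handled exactly as in Section~\ref{sec:rigidpatch}: $f$ even gives $F$ odd.
\item The singular kernel is treated as before via Lemma \ref{lem:potentialtheory}.
\item To see that the $\frac1\varepsilon$ prefactor is harmless, we write the integrals through $\int_0^1\frac{d}{ds}$ in $\varepsilon$, using that the $\varepsilon=0$ value is a constant killed by $\partial_\theta$.
\end{itemize}

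\medskip\noindent\emph{Step 3: the linearized operator.} The operator $\partial_{(\Omega,f)}F(0,\Omega_0,0)(\dot\Omega,h)=l\dot\Omega\sin\theta-\frac{1}{2\pi}(h'+\mathcal H h)$ has a kernel spanned by $(0,\cos\theta)$. This is the degeneracy: the translation mode, coming from the freedom to move the center of the patch.

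Following \cite{HW22}, we remove it by restricting the unknown to
$$\mathcal X=\{f\in X_1^{1,\alpha}: f_1=0\}.$$
The operator then acts on $\R\times\mathcal X\to Y_1^{0,\alpha}$ as $(\dot\Omega,h)\mapsto l\dot\Omega\sin\theta+\frac{1}{2\pi}\sum_{n\ge2}(n-1)h_n\sin(n\theta)$, up to sign. This map is an isomorphism: the symbol $n-1\sim n$ shows that $h\mapsto h'+\mathcal Hh$ is invertible $C^{1,\alpha}\to C^\alpha$ on modes $n\ge2$.

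One checks that imposing $f_1=0$ loses no solutions. The equation projected on $\sin\theta$ is absorbed by $\Omega$, and the full target space is still hit. Theorem \ref{th:IFT} then gives a $C^0$ curve $\varepsilon\mapsto(\Omega_\varepsilon,f_\varepsilon)$ with $\Omega_0=\frac{1}{4\pi l^2}$.

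\medskip\noindent\emph{Main obstacle.} The hardest part is Step 2 at $\varepsilon=0$: proving that the $\frac1\varepsilon$-weighted nonlocal terms extend in a $C^1$ way with uniform Hölder bounds, including the $\varepsilon$-derivative. We will handle it by the integral-remainder representation of the Taylor expansion combined with the kernel estimates from Section~\ref{sec:rigidpatch}.
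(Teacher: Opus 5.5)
Your proposal is correct and follows essentially the same route as the paper. Both arguments remove the apparent $\frac{1}{\varepsilon}$ singularity via the integral form of the Taylor remainder, read off $\Omega_0=\frac{1}{4\pi l^2}$ from $F(0,\Omega,0)=0$, and apply the Implicit Function theorem in $(\Omega,f)\in\R\times\tilde{X}^{1,\alpha}_1$ with the first Fourier mode removed as in \cite{HW22}, so that $(\lambda,h)\mapsto\lambda l\sin\theta+\frac{1}{2\pi}(h'+\mathcal{H}[h])$ is an isomorphism onto $Y^{0,\alpha}_1$. One sign correction: with the paper's convention, $\mathcal{H}[\cos n\theta]=+\sin n\theta$ (your $-\sin n\theta$ would contradict your own kernel claim), which is exactly what puts $\cos\theta$ in the kernel and gives the self-interaction term $+\frac{1}{2\pi}(f'+\mathcal{H}[f])$.
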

\begin{proof}
The proof is based on an application of the Implicit Function Theorem stated in Theorem~\ref{th:IFT}.

The first issue we need to address is the apparent singularity at $\varepsilon=0$. A priori, the functional $F$ is singular at this point. However, since our perturbation is a disc at first order, we can exploit the symmetries of the disc to eliminate this singularity. To this end, we make use of the following identity:
$$
\ln(A+\varepsilon B)=\ln(A)+\varepsilon B\int_0^1\frac{ds}{A+s\varepsilon B}.
$$
First, we make a change of variable in the integrals to write it as
    \begin{align*}
F(\varepsilon,\Omega, f)(\theta)=&\Omega l \sin(\theta) w(\theta)-\Omega l\cos(\theta)\varepsilon w^{-1}(\theta) f'(\theta)-\Omega \varepsilon^2 f'(\theta)\\
&+\frac{\partial_\theta}{2\pi^2 \varepsilon}\int_0^{2\pi}\int_0^{1}\ln(|w(\theta)e^{i\theta}-r w(\eta) e^{i\eta}|)w^2(\eta) rdr d\eta \\
&+\frac{\partial_\theta}{2\pi^2 \varepsilon}\int_0^{2\pi}\int_0^{1}\ln(|2l+\varepsilon(w(\theta) e^{i\theta}+r w(\eta) e^{i\eta})|) w^2(\eta) r dr d\eta.
\end{align*}
Note that defining 
$$
G_1(\varepsilon,f)(r,\theta,\eta):=2(f(\theta)+r f(\eta))-2r \Big[\frac{w(\theta)-1}{\varepsilon}+ \frac{w(\eta)-1}{\varepsilon}+\frac{w(\theta)-1}{\varepsilon}(w(\eta)-1)\Big]\cos(\theta-\eta),
$$
we can write the kernel in the first integral as
\begin{align*}
  \ln(|w(\theta)e^{i\theta}-r w(\eta) e^{i\eta}|^2)=&\ln(1+r^2-2r\cos(\theta-\eta)+\varepsilon G_1(\varepsilon,f)(r,\theta,\eta))\\
  =&\ln(1+r^2-2r\cos(\theta-\eta))\\
  &+\varepsilon G_1(\varepsilon,f)(r,\theta,\eta)\int_0^1 \frac{ds}{1+r^2-2r\cos(\theta-\eta)+s\varepsilon G_1(\varepsilon,f)(r,\theta,\eta)}\\
  =:&\ln(1+r^2-2r\cos(\theta-\eta))+\varepsilon\mathcal{G}_1(\varepsilon,f)(r,\theta,\eta).
\end{align*}
Similarly, we can treat the second integral. Denoting
$$
G_2(\varepsilon,f)(r,\theta,\eta):=4(l,0)\cdot (w(\theta) e^{i\theta}+rw(\eta) e^{i\eta})+\varepsilon |w(\theta) e^{i\theta}+rw(\eta) e^{i\eta}|^2,
$$
we can write
\begin{align*}
  \ln(|2l+\varepsilon(w(\theta)e^{i\theta}+r w(\eta) e^{i\eta})|^2)=&\ln(4l^2+\varepsilon G_2(\varepsilon,f)(r,\theta,\eta))\\
  =&\ln(4l^2)\\
  &+\varepsilon G_2(\varepsilon,f)(r,\theta,\eta)\int_0^1 \frac{ds}{4l^2+s\varepsilon G_2(\varepsilon,f)(r,\theta,\eta)}\\
  =:&\ln(4l^2)+\varepsilon\mathcal{G}_2(\varepsilon,f)(r,\theta,\eta).
\end{align*}
Then, $F$ can be written as
    \begin{align*}
F(\varepsilon,\Omega, f)(\theta):=&\Omega l \sin(\theta) w(\theta)-\Omega l\cos(\theta)\varepsilon w^{-1}(\theta) f'(\theta)-\Omega \varepsilon^2 f'(\theta)\\
&+\frac{\partial_\theta}{4\pi^2 \varepsilon}\int_0^{2\pi}\int_0^{1}\ln(1+r^2-2r\cos(\theta-\eta))w^2(\eta) rdr d\eta \\
&+\frac{\partial_\theta}{4\pi^2 \varepsilon}\int_0^{2\pi}\int_0^{1}\ln(4l^2) w^2(\eta) r dr d\eta\\
&+\frac{\partial_\theta}{4\pi^2 }\int_0^{2\pi}\int_0^{1}\mathcal{G}_1(\varepsilon,f)(r,\theta,\eta) w^2(\eta) rdr d\eta \\
&+\frac{\partial_\theta}{4\pi^2 }\int_0^{2\pi}\int_0^{1}\mathcal{G}_2(\varepsilon,f)(r,\theta,\eta) w^2(\eta) r dr d\eta.
\end{align*}
Now, we shall use the symmetry of the disc. Indeed note that
\begin{align*}
    \partial_\theta \int_0^{2\pi}\int_0^1 \ln(1+r^2-2r\cos(\theta-\eta)) rdr d\eta=& 0,\\
    \partial_\theta \int_0^{2\pi}\int_0^1 \ln(4l^2)w^2(\eta) rdr d\eta=& 0.
\end{align*}
Then, $F$ reduces to:
    \begin{align*}
F(\varepsilon,\Omega, f)(\theta)=&\Omega l \sin(\theta) w(\theta)-\Omega l\cos(\theta)\varepsilon w^{-1}(\theta) f'(\theta)-\Omega \varepsilon^2 f'(\theta)\\
&+\frac{\partial_\theta}{2\pi^2}\int_0^{2\pi}\int_0^{1}\ln(1+r^2-2r\cos(\theta-\eta))f(\eta) rdr d\eta \\
&+\frac{\partial_\theta}{4\pi^2 }\int_0^{2\pi}\int_0^{1}\mathcal{G}_1(\varepsilon,f)(r,\theta,\eta) w^2(\eta) rdr d\eta \\
&+\frac{\partial_\theta}{4\pi^2 }\int_0^{2\pi}\int_0^{1}\mathcal{G}_2(\varepsilon,f)(r,\theta,\eta) w^2(\eta) r dr d\eta.
\end{align*}
Then, we have arrived to an expression of $F$ without singularity in $\varepsilon=0$. Now, let us define $F$ using the function spaces defined in \eqref{funcionspaces} and the following one:
$$
 \tilde{X}^{k,\alpha}_1:=\{ f\in C^{k,\alpha}(\T), \quad f(\theta)=\sum_{n\geq 2}f_n \cos(n\theta)\},
$$
where we have eliminated the first Fourier mode from $X^{k,\alpha}_1$. For $\delta<1$, we define
$$
F:\R^2\times B_{\tilde{X}^{1,\alpha}_1}(\delta)\rightarrow Y^{0,\alpha}_1.
$$
Similar ideas as in Theorem \ref{th:burbea} show that $F$ is well-defined and $C^1$, which is based in an application of Lemma \ref{lem:potentialtheory}. To apply the Implicit Function theorem, we need a trivial root. For that, let us compute $F(0,\Omega,0)$:
\begin{align*}
    F(0,\Omega,0)=&\Omega l\sin(\theta)+\frac{\partial_\theta}{4\pi^2 }\int_0^{2\pi}\int_0^{1}\mathcal{G}_1(0,0)(r,\theta,\eta)  rdr d\eta \\
&+\frac{\partial_\theta}{4\pi^2 }\int_0^{2\pi}\int_0^{1}\mathcal{G}_2(0,0)(r,\theta,\eta)  r dr d\eta\\
=&\Omega l\sin(\theta)+\frac{\partial_\theta}{4\pi^2 l^2}\int_0^{2\pi}\int_0^{1} (l,0)\cdot (e^{i\theta}+re^{i\eta}) r dr d\eta\\
=&\Omega l\sin(\theta)+\frac{\partial_\theta}{4\pi^2 l}\cos(\theta)\int_0^{2\pi}\int_0^{1}  r dr d\eta\\
=&\Omega l\sin(\theta)-\frac{1}{4\pi l}\sin(\theta).
\end{align*}
By defining $\Omega_0:=\frac{1}{4\pi l^2}$ one has
$$
F(0,\Omega_0,0)=0,
$$
where $\Omega_0$ coincides with the angular velocity of the two point vortices.

In order to apply the Implicit Function theorem, we shall check that
$$
(\partial_\Omega +\partial_f) F(0,\Omega_0,0): \tilde{X}^{1,\alpha}_1\rightarrow Y^{0,\alpha}_1,
$$
is an isomorphism. We can compute the Fr\'echet derivative to find that
\begin{align*}
 (\partial_\Omega +\partial_f) F(0,\Omega_0,0)[\lambda,h](\theta)=\lambda l\sin(\theta)+\frac{1}{2\pi}(h'(\theta)+\mathcal{H}[h](\theta)),   
\end{align*}
where $\mathcal{H}$ stands for the Hilbert transform defined in \eqref{hilberttransform}. Here we can see why it is necessary to also perturb the angular velocity $\Omega$. Indeed, observe that
\[
(\partial_\theta+\mathcal{H})\cos(\theta)
=
-\sin(\theta)+\sin(\theta)=0,
\]
which produces a degeneracy at the first Fourier mode.

If we only consider the derivative $\partial_f F$, then the corresponding direction belongs to the kernel of the linearized operator, violating the isomorphism condition required by the Implicit Function Theorem. In \cite{HM17}, this difficulty is resolved by fixing $\Omega=\Omega(\varepsilon,f)$ as a function of $(\varepsilon,f)$ in such a way that the first Fourier mode is completely eliminated from the nonlinear equation $F$. However, the resulting expression for $\Omega$ is rather complicated.

In \cite{HW22}, the authors adopt a different approach: they treat $\Omega$ as an additional bifurcation parameter and consider the linearization $(\partial_\Omega+\partial_f)F$ instead of $\partial_f F$ alone. This significantly simplifies the computations and removes the degeneracy at the first Fourier mode, as can be seen by writing the linearized operator in Fourier series:
\begin{align*}
    (\partial_\Omega +\partial_f) F(0,\Omega_0,0)[\lambda,h](\theta)=\lambda l\sin(\theta)-\sum_{n\geq 2}h_n\sin(n\theta)(n-1),
\end{align*}
which is an isomorphism from $\R\times \tilde{X}^{1,\alpha}_1\rightarrow Y^{0,\alpha}_1$. Notice that to obtain an isomorphism, it is important to eliminate the first Fourier mode in $\tilde{X}^{1,\alpha}_1$, otherwise it will appear in the kernel. By applying the Implicit Function theorem, we conclude the proof.
\end{proof}

\appendix

\section{Potential theory}\label{sec:appendix} 
This  section is devoted to some results on  the continuity of a specific  operator with singular kernels, taking the form 
\begin{equation}\label{int-op}
\mathcal{K}(f)(x)=\int_0^1\int_0^1 K(r,x,y)f(y)dr dy,
\end{equation}
with $x\in[0,1]$ and the kernel  $K:[0,1]^3 \rightarrow \R$ is smooth out $x=y$ and some discrete points for $r$. The proof of the following lemma is classical, many versions of it can be found in the literature, see \cite{HMV13, GHM22-1, GHM23} and references therein. However, we give here the proof for the sake of clarifying adapted to our particular operator.

\begin{lem}\label{lem:potentialtheory}
Let $K:[0,1]^3\rightarrow \R$ satisfies
\begin{align}
|K(r,x,y)|&\leq \frac{C_0 }{|x-y|^{1-\alpha}}{g_1(r)},\label{prop-potentialtheory-h0}\\ 
|\partial_{x} K(r,x,y)| &\leq \frac{C_0 }{|x-y|^{2-\alpha}}{g_2(r)}\label{prop-potentialtheory-h2},
\end{align}
with $\alpha\in(0,1)$ and {$g_1,g_2\in L^1([0,1])$}. Then $\mathcal{K}:L^\infty([0,1])\rightarrow C^\alpha([0,1])$ is well-defined and
$$
\|\mathcal{K}(f)\|_{C^\alpha}\leq C C_0 \|f\|_{L^\infty},
$$
with $C$ an absolute constant.
\end{lem}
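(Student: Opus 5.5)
The plan is the classical H\"older estimate for weakly singular integral operators: first get an $L^\infty$ bound, then the H\"older seminorm by splitting near and far from the singularity. Throughout, write $\mathcal{K}(f)(x)=\int_0^1 k(x,y)f(y)\,dy$ with $k(x,y):=\int_0^1 K(r,x,y)\,dr$. Integrating the hypotheses \eqref{prop-potentialtheory-h0}--\eqref{prop-potentialtheory-h2} in $r$ gives
$$
|k(x,y)|\leq \frac{C_0\|g_1\|_{L^1}}{|x-y|^{1-\alpha}},\qquad |\partial_x k(x,y)|\leq \frac{C_0\|g_2\|_{L^1}}{|x-y|^{2-\alpha}}.
$$
The second bound requires differentiating under the $r$-integral for $x\neq y$, which is justified by dominated convergence: the bound on $\partial_xK$ is locally uniform away from $x=y$. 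In this way the problem reduces to a one-dimensional kernel, and the norms $\|g_i\|_{L^1}$ are absorbed into the constant.

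For the sup bound, $|\mathcal{K}(f)(x)|\leq C C_0\|f\|_{L^\infty}\int_0^1|x-y|^{\alpha-1}dy\leq CC_0\|f\|_{L^\infty}$, since $\alpha>0$ makes the kernel integrable.

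For the seminorm, fix $x_1\neq x_2$ in $[0,1]$ and set $d=|x_1-x_2|$. Split $[0,1]$ into the near region $N=\{y:|y-x_1|<2d\}$ and its complement $N^c$.

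On $N$, bound each term separately. Since $N\subset\{|y-x_2|<3d\}$,
$$
\int_N \big(|k(x_1,y)|+|k(x_2,y)|\big)|f(y)|\,dy\leq CC_0\|f\|_{L^\infty}\int_{|z|<3d}|z|^{\alpha-1}dz\leq CC_0\|f\|_{L^\infty}d^\alpha .
$$

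On $N^c$, apply the mean value theorem in $x$. For $\xi$ between $x_1$ and $x_2$ one has $|\xi-y|\geq |x_1-y|-d\geq \tfrac12|x_1-y|$. Hence
$$
|k(x_1,y)-k(x_2,y)|\leq CC_0\, d\,|x_1-y|^{\alpha-2},
$$
and integrating over $|x_1-y|\geq 2d$ gives $CC_0\,d\int_{2d}^\infty t^{\alpha-2}dt\leq CC_0 d^{\alpha}$. This step uses $\alpha<1$.

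Combining the two regions gives $|\mathcal{K}(f)(x_1)-\mathcal{K}(f)(x_2)|\leq CC_0\|f\|_{L^\infty}|x_1-x_2|^\alpha$. Together with the sup bound, this is the claim. Continuity of $\mathcal{K}(f)$, and hence well-definedness, follows from the same estimate.

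The main obstacle is the far-region estimate. It needs the kernel $k$ to be differentiable in $x$ off the diagonal, but $K$ is only assumed smooth away from $x=y$ and from finitely many values of $r$. I would handle this by applying the mean value theorem to $K(r,\cdot,y)$ for a.e.\ fixed $r$, on the segment $[x_1,x_2]$, which avoids $y$ when $y\in N^c$. After that I integrate in $r$ using $g_2\in L^1$, and so never need to differentiate $k$ itself. The rest is routine. Note that the constant $C$ then also depends on $\|g_1\|_{L^1},\|g_2\|_{L^1}$, which is consistent with how the lemma is used above.
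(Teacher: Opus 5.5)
Your proposal is correct and follows essentially the same route as the paper: an $L^\infty$ bound from the integrability of $|x-y|^{\alpha-1}$, then a near/far splitting at scale $d=|x_1-x_2|$ (you take radius $2d$, the paper $3d$), with the near part bounded termwise and the far part handled by the mean value theorem together with $\int_{|x_1-y|\gtrsim d}|x_1-y|^{\alpha-2}\,dy\lesssim d^{\alpha-1}$. Your closing remark, applying the mean value theorem to $K(r,\cdot,y)$ for fixed $r$ instead of differentiating $k$, is exactly what the paper does, and you are right that the constant $C$ also depends on $\alpha$ and on $\|g_1\|_{L^1},\|g_2\|_{L^1}$.
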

\begin{proof}
The $L^\infty$ norm of $\mathcal{K}(f)$ can be estimated as
\begin{align*}
\left|\mathcal{K}(f)(x)\right|\leq &C\|f\|_{L^\infty}\int_0^1\int_0^1 |K(r,x,y)|dr dy\\
\leq & C C_0\|f\|_{L^\infty}\int_0^1 \frac{dy}{|x-y|^{1-\alpha}}{\int_0^1|g_1(r)|dr}\\
\leq & C C_0\|f\|_{L^\infty}.
\end{align*}
The convergence follows from the assumptions $\alpha\in (0,1)$. Hence,
$$
\|\mathcal{K}(f)\|_{L^\infty}\leq C C_0\|f\|_{L^\infty}.
$$
For the H\"older regularity, take $x_1, x_2\in[0,1]$ with $x_1<x_2$. Define $d=|x_1-x_2|$, $B_{x_1}(\rho)=\left\{y_1\in[0,1] :  |y_1-x_1|<\rho\right\}$ and $B^c_{x_1}(\rho)$ its complement set.  Hence
\begin{align*}
\mathcal{K}(f)(x_1)&-\mathcal{K}(f)(x_2)\\
=&\int_0^1\int_0^1K(r,x_1,y)f(y)dr dy-\int_0^1\int_0^1K(r,x_2,y)f(y)dr dy\\
=&\int_0^1\int_{[0,1]\cap B_{x_1}(3d)}K(r,x_1,y)f(y)dr dy\\
&-\int_0^1\int_{[0,1]\cap B_{x_1}(3d)}K(r,x_2,y)f(y)dr dy\\
&+\int_0^1\int_{[0,1]\cap B^c_{x_1}(3d)}(K(r,x_1,y)-K(r,x_2,y))f(y)dr dy\\
=:& I_1+I_2+I_3.
\end{align*}
Using \eqref{prop-potentialtheory-h0}, we arrive at
\begin{align*}
|I_1|\leq& CC_0\|f\|_{L^\infty}\int_{[0,1]\cap B_{x_1}(3d)}\frac{1}{|x_1-y|^{1-\alpha}}dy\int_0^1{|g_1(r)|}dr \\
\leq& CC_0\|f\|_{L^\infty}\int_{B_{x_1}(3d)}\frac{1}{|x_1-y|^{1-\alpha}}dy\\
\leq & CC_0\|f\|_{L^\infty} d^\alpha\\
=&CC_0\|f\|_{L^\infty} |x_1-x_2|^\alpha.
\end{align*}
In order to work with $I_2$, note that $B_{x_1}(3d)\subset B_{x_2}(4d)$. Thus,
\begin{align*}
|I_2|\leq& CC_0\|f\|_{L^\infty}\int_{[0,1]\cap B_{x_1}(3d)}\frac{1}{|x_2-y|^{1-\alpha}}dy\int_0^1{|g_1(r)|}dr \\
\leq& CC_0\|f\|_{L^\infty}\int_{B_{x_1}(4d)}\frac{1}{|x_2-y|^{1-\alpha}}dy\\
\leq&CC_0\|f\|_{L^\infty} |x_1-x_2|^\alpha.
\end{align*}
For the last term $I_3$ we use the mean value theorem and \eqref{prop-potentialtheory-h2} achieving
\begin{align*}
|I_3|\leq &C \left|(x_1-x_2)\int_0^1\int_0^1\int_{[0,1]\cap B^c_{x_1}(3d)}(\partial_{x_1}K)(r, x_1+(1-s)(x_2-x_1),y)f(y)dydr ds\right|\\
\leq &C C_0 \|f\|_{L^\infty}|x_1-x_2| \int_0^1\int_{[0,1]\cap B^c_{x_1}(3d)}\frac{dyds}{|x_1+(1-s)(x_2-x_1)-y|^{2-\alpha}}\int_0^1{|g_2(r)|dr }.
\end{align*}
Note that if $y\in B^c_{x_1}(3d)$, then
$$
|x_1+(1-s)(x_2-x_1)-y|\geq |x_1-y|-(1-s)d\geq |x_1-y|-\frac{(1-s)}{3}|x_1-y|\geq \frac23|x_1-{y}|,
$$
which implies
\begin{align*}
|I_3|\leq &C C_0 \|f\|_{L^\infty} |x_1-x_2|\int_{[0,1]\cap B^c_{x_1}(3d)}\frac{dy}{|x_1-y|^{2-\alpha}}\\
\leq &C C_0 \|f\|_{L^\infty} |x_1-x_2|\frac{1}{|x_1-x_2|^{1-\alpha}}\\
\leq &C C_0 \|f\|_{L^\infty} |x_1-x_2|^\alpha.
\end{align*}
Putting together the preceding estimates yields
$$
|\mathcal{K}(f)(x_1)-\mathcal{K}(f)(x_2)|\leq C C_0 \|f\|_{L^\infty} |x_1-x_2|^\alpha.
$$
Then, 
$$
||\mathcal{K}(f)||_{\mathscr{C}^\alpha}\leq C C_0 \|f\|_{L^\infty},
$$
concluding the proof.
\end{proof}

\section*{Funding and conflicts of interest}

C.G. has been supported by RYC2022-035967-I (MCIU/AEI/10.13039/501100011033 and
FSE+), and partially by Grants PID2022-140494NA-I00 and PID2022-137228OB-I00 funded
by MCIN/AEI/10.13039/501100011033/FEDER, UE, by Grant C-EXP-265-UGR23 funded by
Consejeria de Universidad, Investigacion e Innovacion \& ERDF/EU Andalusia Program, and
by Modeling Nature Research Unit, project QUAL21-011. Proyecto realizado con la Beca Leonardo a
Investigadores y Creadores Culturales 2024 de la Fundaci\'on BBVA. 

The author has no competing interests to declare that are relevant to the content of this article.

	\bibliography{references}	
	\bibliographystyle{plain}

\end{document}